\def\C{\mathsf{C}}
\def\Cpt{\mathsf{C}_{\ast}}
\def\top{\mathsf{Top}}
\def\set{\mathsf{Set}}
\def\setpt{\mathsf{Set}_{\ast}}
\def\toppt{\mathsf{Top}_{\ast}}
\def\epitop{\mathsf{EpiTop}}
\def\epitoppt{\mathsf{EpiTop}_{\ast}}
\def\pstop{\mathsf{PsTop}}
\def\pstoppt{\mathsf{PsTop}_{\ast}}
\def\Lim{\mathsf{Lim}}
\def\conv{\mathsf{Conv}}
\def\kconv{\mathsf{KConv}}
\def\pstopgrp{\mathsf{PsTopGrp}}
\def\epitopgrp{\mathsf{EpiTopGrp}}
\def\topgrp{\mathsf{TopGrp}}
\def\grp{\mathsf{Grp}}
\def\qtopgrp{\mathsf{QTopGrp}}
\newcommand{\inclu}{\hookrightarrow}
\DeclareMathOperator{\ev}{\mathrm{ev}}
\DeclareMathOperator{\biss}{\pi_1^{\mathsf{qtop}}}
\DeclareMathOperator{\brazas}{\pi_1^{\mathsf{top}}}
\DeclareMathOperator{\pips}{\pi_1^{\mathsf{ps}}}
\DeclareMathOperator{\piepi}{\pi_1^{\mathsf{epi}}}
\DeclareMathOperator{\pihps}{\pi_\mathnormal{n}^{\mathsf{ps}}}
\DeclareMathOperator{\pihepi}{\pi_\mathnormal{n}^{\mathsf{epi}}}
\newcommand{\Prod}{\operatorname{\Pi}}
\DeclareMathOperator{\pcset}{\pi_0}
\DeclareMathOperator{\pctop}{\pi_0}
\DeclareMathOperator{\pcepi}{{\pi}_0^\mathsf{epi}}
\DeclareMathOperator{\pcps}{{\pi}_0^\mathsf{ps}}
\DeclareMathOperator{\looptop}{\Omega}
\DeclareMathOperator{\loopepi}{\Omega^\mathsf{epi}}
\DeclareMathOperator{\loopps}{\Omega^\mathsf{ps}}
\DeclareMathOperator{\loopC}{\Omega^\mathsf{C}}
\newcommand{\quot}[2]{q_{#1}^{\mathrm{#2}}}
\newcommand{\quott}[2]{p_{#1}^{\mathrm{#2}}}
\newcommand{\id}{\mathrm{id}}
\newtheorem{defn}{Definition}[section]
\newtheorem{prop}[defn]{Proposition}
\newtheorem{lemma}[defn]{Lemma}
\newtheorem{cor}[defn]{Corollary}
\newtheorem{rmk}[defn]{Remark}
\newtheorem{problem}{Problem}
\title[Epi- and pseudo-topological fundamental group functors]{Epitopological and pseudotopological fundamental group functors}
\author{Giacomo Dossena}
\email{giacomo.dossena@gmail.com}
\thanks{March 2015}
\begin{document}

\begin{abstract}
In these notes the epitopological and pseudotopological fundamental group functors are introduced. These are functors from the category of pointed epitopological and pseudotopological spaces respectively, to the category of their respective group-objects. Their restrictions to the full subcategory of topological spaces are lifts of the topologized fundamental group functor introduced by Daniel Biss in \cite{Bis2002} and thus retain its information. At the same time, they show greater regularity inherited from the convenient properties of $\epitop$ and $\pstop$. Moreover, the use of such convenient categories permits, in principle, to apply general techniques from enriched homotopy theory. Our approach should be compared with the alternative improvement of Biss's functor developed by Jeremy Brazas in \cite{Bra2013} within the topological setting. Several open problems, including those aimed at understanding the precise relation to Brazas's approach, are scattered throughout the text.
\end{abstract}

\maketitle
\thispagestyle{plain}

\tableofcontents

\section{Overview}\label{sec:overview}
The usual fundamental group functor $\pi_1\colon\toppt \to\grp$ which assigns to each pointed topological space $(X,x_0)\in\toppt$ the group of homotopy classes of based loops and to each based continuous map $f\colon X\to Y$ the group homomorphism $[l]\mapsto [f\circ l]$ can be factorized as $\pi_1=\pcset\circ\looptop$ where $\looptop\colon\toppt\to\toppt$ is the loop space functor and $\pcset\colon\toppt\to\setpt$ is the path-component set functor. It is evident that the latter forgets the topological structure of $\looptop(X,x_0)$, thus throwing away potentially useful information encoded in it. A simple workaround to make this information available is to lift $\pcset$ to a functor $\pctop\colon\toppt\to\toppt$ (we keep the same symbol for economical reason) by placing on the set of path components of a space the quotient topology induced by the natural projection that assigns to each point its path component. This was done by Daniel Biss \cite{Bis2002} and results in a topologized fundamental group functor $\biss\colon\toppt\to\qtopgrp$, where $\qtopgrp$ is the category of quasitopological groups\footnote{A quasitopological group is almost like a topological group, except that we do not require multiplication to be continuous.} and continuous homomorphisms. As clarified by Paul Fabel \cite{Fab2009} and Jeremy Brazas \cite{Bra2011}, the reason why $\biss$, contrary to what is claimed in \cite{Bis2002}, does not land in $\topgrp$ is intimately related to the fact that in $\top$ a product of quotient maps need not be quotient. This shortcoming has inspired an ingenious modification \cite{Bra2013} of the quotient topology of $\biss(X,x_0)$ to obtain a genuine $\topgrp$-valued functor $\brazas$, consisting in iterating (transfinitely many times, in general) the procedure that places on $\pi_1(X,x_0)$ the quotient topology with respect to the multiplication $\biss(X,x_0)\times\biss(X,x_0)\to\pi_1(X,x_0)$. The additional information encoded in $\biss$ and $\brazas$ is inconspicuous for spaces admitting universal covers, whereas it turns out to be useful to discriminate among spaces with complicated local structure, i.e.~that fail to be locally path connected or semilocally simply connected. The functor $\brazas$ allows also for a generalized theory of covering spaces \cite{Bra2012a}.

We would like to follow a different route and leave $\top$ in favor of one of its better-behaved supercategories where quotient maps are product-stable, thus yielding a continuous multiplication in the fundamental group. Clearly the new functor will not land in $\topgrp$ but rather in the category of group-objects of the chosen supercategory.

When substituting $\top$ with a larger category there are two contrasting principles at work \cite{Her1987} summarized by the catchlines ``the ampler the better'' and ``the meagerer the better'', that is, larger extensions are needed when stronger convenience requirements are made, whereas smaller extensions generally retain more structure of the original category. In our case it turns out there are at least two suitable extensions of $\top$: the smaller one is its cartesian closed topological hull $\epitop$, whose objects are the so-called epitopological spaces (also known as Antoine spaces), and the larger one is its topological universe hull $\pstop$, whose objects are the so-called pseudotopological spaces.

In the spirit of the catchlines above, $\epitop$ and $\pstop$ are large enough to be cartesian closed, and small enough to be still significantly related to $\top$. The key fact that makes $\epitop$ and $\pstop$ suited for defining our enriched fundamental groups is the validity of the pasting lemma, together with the concrete reflectivity of the embeddings $\top\inclu\epitop\inclu\pstop$. We shall give proper definitions in the next sections. Here we only mention that concrete reflectivity does not rule out larger constructs from the list of possibly useful extensions, whereas the validity of the pasting lemma seems to be peculiar to extensions not larger than $\pstop$.

In common with $\biss$ and $\brazas$, the resulting functors
\begin{equation*}
\begin{aligned}
\piepi&\colon\epitoppt\to\epitopgrp\\
\pips&\colon\pstoppt\to\pstopgrp
\end{aligned}
\end{equation*}
are homotopy invariant and behave as expected under a change of basepoint. Moreover, their restrictions to $\toppt$ are suitable lifts of $\biss$. This implies that all the information encoded in $\biss$ is available in both $\piepi$ and $\pips$.
A piece of evidence that convenient properties of $\epitop$ and $\pstop$ translate into greater regularity of $\piepi$ and $\pips$ than $\biss$ is given by the fact that both $\piepi$ and $\pips$ preserve finite products in their respective categories ($\pips$ preserves even arbitrary ones), while $\biss$ does not. This should be compared with the fact that $\brazas$ preserves finite products (it is not known whether it preserves arbitrary ones). A thorough investigation of basic categorical properties of $\piepi$ and $\pips$ has yet to be done.

\section{\texorpdfstring{$\pstop$}{PsTop} and \texorpdfstring{$\epitop$}{EpiTop}}\label{sec:presentation}
No originality is claimed for this section. The reader familiar with these categories may safely skip it. References for $\pstop$ include \cite{Cho1947}, \cite{BenHerLow1991}, \cite{HerColSch1991}, \cite{Wyl1991}. References for $\epitop$ include \cite{Ant1966}, \cite{Mac1973}, \cite{Bou1975}.

The existence of two different topologized versions of $\pi_1$ as reviewed in Sec.\@ \ref{sec:overview} can be ascribed to the fact that $\top$ is not cartesian closed. Indeed, if $\top$ were cartesian closed then the two procedures that define $\biss$ and $\brazas$ would give the same topologization for $\pi_1(X,x_0)$. In particular, this topologization would enjoy some desirable properties associated with $\biss$ and $\brazas$ separately: it would be defined in a straightforward way internally, as in $\biss$, and it would make the corresponding functor land in the category of group-objects of $\top$, as in $\brazas$. It is then natural to construct a variant of these functors on a cartesian closed category related to $\top$. We might consider \emph{sub}categories of $\top$ but, in order to keep all topological spaces into the picture, we consider instead its \emph{super}categories. This has the additional effect of recovering $\biss$ by reflecting back to $\top$, as we shall see.
For reasons already explained in Sec.\@ \ref{sec:overview} we look for extensions that do not depart too much from $\top$. A useful setting for such a task is that of topological constructs. Loosely speaking, these are categories of structured sets admitting initial and final structures, as $\top$. More precisely, a topological construct is a concrete category over $\set$ which has unique initial (and hence also unique final) structures with respect to the forgetful functor (see the full treatment \cite{AdaHerStr2004} for more details). Sometimes a topological construct is assumed to be well-fibered\footnote{Well-fibered means that the following two conditions hold: for each set $X$, the class of objects having $X$ as underlying set is a set; for each set $X$ with at most one element, there is exactly one object with underlying set $X$.}. All the constructs considered in these notes are well-fibered, so in order to facilitate the presentation we tacitly assume a topological construct to be such.

We now recall the notion of an exponentiable object in a topological category. First notice that any topological construct $\C$ has concrete products: the product of a family of $\C$-objects $(X_j)_{j\in J}$ is defined as the unique initial lift of the structured source $(\Prod_{k\in J} |X_k| \to X_j)_{j\in J}$. Moreover, given $\C$-objects $X,Y,Z$ and a $\C$-morphism $f\colon X\times Y\to Z$, for each point $x\in X$ the set map $f_x\colon Y\to Z$ given by $f_x(y)=f(x,y)$ is a $\C$-morphism\footnote{Here we used the assumption of well-fiberedness.}.

\begin{defn}
Given a topological construct $|\cdot|\colon\C\to\set$, an object $X\in\C$ is said to be exponentiable if for each $Y\in\C$ there is a $\C$-object, denoted by $Y^X$, such that:
\begin{enumerate}
\item $|Y^X|=\C(X,Y)$,
\item the evaluation map $\ev\colon Y^X\times X\to Y$ is a $\C$-morphism,
\item for any $Z\in\C$ and any $\C$-morphism $h\colon Z\times X\to Y$ the corresponding set map $\hat{h}\colon Z\to Y^X$ defined by $\hat{h}(z)(x)=h(z,x)$ is a $\C$-morphism.
\end{enumerate}
The $\C$-structure defining $Y^X$ is called exponential.
\end{defn}

\begin{rmk}
Notice that for an exponentiable object $X\in\C$ the correspondence $\C({Z\times X}, Y)\to\C(Z,Y^X)$ given by $h\mapsto \hat{h}$ is a bijection.
\end{rmk}

A topological construct is cartesian closed iff all its objects are exponentiable (\cite{Her1974} or \cite[Thm 2.14]{HerColSch1991}). Although $\top$ is not cartesian closed, the class of its exponentiable spaces is well understood \cite{EscHec2001} and contains all locally compact Hausdorff spaces. In particular, the interval $[0,1]$ with its standard topology is exponentiable. Moreover, for a locally compact Hausdorff space $X$ and any space $Y\in\top$ the exponential topology defining $Y^X$ is the compact-open topology.

Before introducing pseudotopological spaces we briefly recall that, given a map $f\colon X\to Y$ between sets and a filter $\mathscr{F}$ on $X$, we can always define the pushforward filter $f_*\mathscr{F}$ on $Y$ as the filter generated by the filter base $\{f(F)\mid F\in\mathscr{F}\}$. It turns out that $f_*\mathscr{F}=\{S\subset Y\mid f^{-1}(S)\in \mathscr{F}\}$ and that for any maps $g\colon X\to Y$ and $f\colon Y\to Z$ we have $(f\circ g)_*\mathscr{F}=f_*(g_*\mathscr{F})$. Moreover, if $\mathscr{F}$ is a (principal) ultrafilter then $f_*\mathscr{F}$ is a (principal) ultrafilter. For a set $X$ and $x\in X$ we denote by $F(X)$ the set of all filters on $X$, by $U(X)$ the set of all ultrafilters on $X$ and by $\dot{x}$ the principal ultrafilter on $X$ consisting of all supersets of $\{x\}$.

\begin{defn}[$\pstop$]
A pseudotopological structure on a set $X$ is a relation $u\subset U(X)\times X$ such that $(\dot{x},x)\in u$ for each $x\in X$. The pair $(X,u)$ is called a pseudotopological space, or pseudospace for short. A map $f\colon X\to Y$ between pseudospaces $(X,u_X)$ and $(Y,u_Y)$ is called continuous at $x\in X$ if we have $(f_*\mathscr{U},f(x))\in u_Y$ whenever $(\mathscr{U},x)\in u_X$. A map $f\colon X\to Y$ is called continuous if it is continuous at each $x\in X$. We denote by $\pstop$ the category whose objects are all pseudospaces and whose morphisms are all continuous maps between them.
\end{defn}

The set of all pseudotopological structures on a given set is a complete lattice with respect to set inclusion. The least and greatest elements are, respectively, the discrete pseudotopological structure $\{(\dot{x},x)\mid x\in X\}$ and the indiscrete pseudotopological structure $U(X)\times X$. Given a set map $f\colon X\to Y$, a pseudospace structure $u$ on $X$ induces the pseudotopological structure $\{(f_*\mathscr{U},f(x))\mid (\mathscr{U},x)\in u\}\cup\{(\dot{y},y)\mid y\in Y\}$ on $Y$ and this is the least one among all pseudotopological structures on $Y$ making $f$ continuous. It is the final pseudotopological structure with respect to the pseudospace $(X,u)$ and the set map $f$. Dually, a pseudospace structure $u$ on $Y$ induces the pseudotopological structure $\{(\mathscr{U},x)\mid(f_*\mathscr{U},f(x))\in u\}\cup\{(\dot{x},x)\mid x\in X\}$ on $X$ and this is the greatest one among all pseudotopological structures on $X$ making $f$ continuous. It is the initial pseudotopological structure with respect to the pseudospace $(Y,u)$ and the set map $f$. It follows that the category $\pstop$ together with the obvious forgetful functor, defined by $|(X,u)|=X$ on objects and the identity on maps, is a topological construct. Explicitly, the initial pseudotopology $u$ on a set $X$ with respect to a collection of maps $\{f_j\colon X\to (X_j,u_j)\}_{j\in J}$ is the intersection of all the initial pseudotopological structures induced by each $f_j$. Dually, the final pseudotopology $u$ on a set $X$ with respect to a collection of maps $\{f_j\colon (X_j,u_j)\to X\}_{j\in J}$ is the union of all the final pseudotopological structures induced by each $f_j$.

In the following we often use the alternative notation $\mathscr{U}\to_u x$, or more simply $\mathscr{U}\to x$, to indicate $(\mathscr{U},x)\in u$. When there is no possibility of confusion we adopt the abuse of notation of writing $X$ for the pseudospace $(X,u)$ and, accordingly, sometimes we use the notation $\mathscr{U}\to_X x$.

A pseudotopological structure $u\subset U(X)\times X$ induces a relation $u'\subset F(X)\times X$ defined by $(\mathscr{F},x)\in u'$ iff for all ultrafilters $\mathscr{U}\supset \mathscr{F}$ we have $(\mathscr{U},x)\in u$. When we write $\mathscr{F}\to x$ for a filter $\mathscr{F}$ on a pseudospace $(X,u)$ we thus mean\footnote{It is actually possible to define a pseudotopological structure directly by means of filters rather than ultrafilters, thereby using a relation $v\subset F(X)\times X$, in which case we must append to the single axiom $\dot{x}\to_v x$  the following two additional axioms:
\begin{itemize}
\item $F(X)\ni\mathscr{G}\supset\mathscr{F}\to_v x\implies\mathscr{G}\to_v x$,
\item if a filter $\mathscr{F}\not\to_v x$ then there is a filter $\mathscr{F}'\supset \mathscr{F}$ such that, for all filters $\mathscr{F}''\supset \mathscr{F}'$, $\mathscr{F}''\not\to_v x$.
\end{itemize}} $(\mathscr{F},x)\in u'$.

Not surprisingly, $\top$ can be viewed as a full isomorphism-closed subcategory of $\pstop$ by assigning to each topological space $X$ the pseudospace consisting of the same underlying set equipped with the pseudotopological structure given by ultrafilter convergence.

As opposed to $\top$, the construct $\pstop$ is cartesian closed \cite{HerColSch1991}. For $X,Y\in\pstop$ the exponential pseudotopology on $\pstop(X,Y)$ is given by declaring, for a filter $\mathscr{F}$ on $\pstop(X,Y)$ and $f\in\pstop(X,Y)$, $\mathscr{F}\to f$ iff, whenever $F(X)\ni\mathscr{G}\to_X x$, we have $\ev_*(\mathscr{F}\times \mathscr{G})\to_Y f(x)$. Here $\mathscr{F}\times \mathscr{G}$ is the filter generated by the collection $\{F\times G\mid F\in \mathscr{F}, G\in \mathscr{G}\}$ and $\ev\colon \pstop(X,Y)\times X\to Y$ is the evaluation map.

\begin{defn}[$\epitop$]
A pseudospace $X$ is called epitopological, or an epispace for short, if it is the initial pseudospace\footnote{The original definition by Antoine \cite{Ant1966} \cite{Mac1973} uses the larger category of quasitopological spaces as ``environment'' but it is not hard to prove the equivalence with ours by following a trail of theorems in \cite{Mac1973}.} with respect to a collection of maps $\left\{f_j\colon X\to {Z_j}^{Y_j}\right\}_{j\in J}$ where $Y_j,Z_j\in\top$. The collection of all epispaces determines a full isomorphism-closed subcategory of $\pstop$ denoted by $\epitop$.
\end{defn}

Every topological space $X\in\top$ is epitopological by considering the bijection $X\to X^{\{*\}}$, $x\mapsto (*\mapsto x)$ where $\{*\}$ is a one-point topological space, therefore $\top$ is a full isomorphism-closed subcategory of $\epitop$. Given $Y\in\epitop$, the exponential pseudotopology on $\pstop(X,Y)$ turns out to be epitopological \cite{Mac1973} and thus every epispace is exponentiable. In other words, $\epitop$ is cartesian closed. In a precise sense, $\epitop$ is the smallest cartesian closed topological construct generated by $\top$ (called the cartesian closed topological hull of $\top$, see e.g.\@ \cite{LowSioVer2009} for more details).

We summarize all these results in the next proposition.

\begin{prop}\label{prop:expo}
Of the three topological constructs $$\top\inclu\epitop\inclu\pstop\;,$$ the last two are cartesian closed. For $X,Y\in\pstop$, the exponential pseudotopology defining $Y^X$ is the one described above. Moreover:
\begin{enumerate}
\item\label{expo:itm:first} whenever $Y\in\epitop$, we have $Y^X\in\epitop$. Therefore, the exponential epitopology is just the exponential pseudotopology applied to epispaces;
\item\label{expo:itm:second} whenever $X,Y\in\top$ with $X$ locally compact Hausdorff, we have $Y^X\in\top$. Therefore, for a locally compact Hausdorff space $X$ the exponential topology is just the exponential pseudotopology applied to spaces.
\end{enumerate}
\end{prop}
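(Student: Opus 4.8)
The cartesian closedness of $\pstop$ is due to \cite{HerColSch1991} and that of $\epitop$ to \cite{Mac1973}; the plan below is to reconstruct the argument, working down the chain $\top\inclu\epitop\inclu\pstop$: verify the universal property of the exponential directly in $\pstop$, transport the outcome to $\epitop$ by a currying argument, and then match with the classical situation in $\top$. I use freely that convergence of a filter in a pseudospace amounts to convergence of all the ultrafilters refining it, and that the product in $\pstop$ is the initial lift of the projection source, so that $\mathscr F\to_Z z$ and $\mathscr G\to_X x$ imply $\mathscr F\times\mathscr G\to_{Z\times X}(z,x)$. Fix $X,Y\in\pstop$ and put on $\pstop(X,Y)$ the pseudotopology described before the statement; call this object $Y^X$. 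First, it is a pseudotopology: $\dot f\to f$ unwinds to $\ev_*(\dot f\times\mathscr G)=f_*\mathscr G\to_Y f(x)$ whenever $\mathscr G\to_X x$, which holds since $f$ is continuous. Continuity of $\ev\colon Y^X\times X\to Y$ is then immediate, since for $\mathscr U\to_{Y^X}f$ and $\mathscr G\to_X x$ the relation $\ev_*(\mathscr U\times\mathscr G)\to_Y f(x)$ is precisely the defining clause. For the transpose of a continuous $h\colon Z\times X\to Y$, each $h_z\colon X\to Y$ is continuous (recalled before the definition of exponentiable object), so $\hat h\colon Z\to\pstop(X,Y)$ is a well-defined set map; and if $\mathscr F\to_Z z$ and $\mathscr G\to_X x$, then, using $\ev\circ(\hat h\times\id_X)=h$,
\[\ev_*\bigl((\hat h_*\mathscr F)\times\mathscr G\bigr)=\ev_*\bigl((\hat h\times\id_X)_*(\mathscr F\times\mathscr G)\bigr)=h_*(\mathscr F\times\mathscr G)\to_Y h(z,x),\]
so $\hat h_*\mathscr F\to_{Y^X}\hat h(z)$. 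Hence every object of $\pstop$ is exponentiable, so $\pstop$ is cartesian closed; and since an exponential structure is unique when it exists (given two, transposing each evaluation map with respect to the other shows the identity is continuous both ways), the structure described before the statement is the exponential one.

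For $\epitop$, the crux is the lemma: for $Z\in\top$ and \emph{any} $W\in\pstop$, the exponential $Z^W$ is epitopological. I would realize $Z^W$ as the initial pseudospace with respect to the source of all maps $\phi^*\colon Z^W\to Z^T$, $\phi^*(g)=g\circ\phi$, indexed by pairs $(T,\phi)$ with $T\in\top$ and $\phi\colon T\to W$ continuous; each $Z^T$ with $T\in\top$ is epitopological by definition, and each $\phi^*$ is continuous, being the transpose of $\ev\circ(\id_{Z^W}\times\phi)$. The source is initial: if $(\phi^*)_*\mathscr F\to_{Z^T}f\circ\phi$ for all such $(T,\phi)$ and $\mathscr G\to_W w$, form the topological probe $T$ on the underlying set of $W$ in which $w$ has neighbourhood filter $\mathscr G$ and all other points are isolated, so that $\id\colon T\to W$ is continuous; applying the hypothesis to this probe at the point $w$ with the filter $\mathscr G$, and using $\ev^T\circ(\phi^*\times\id_T)=\ev^W$, gives $\ev^W_*(\mathscr F\times\mathscr G)\to_Z f(w)$, i.e.\ $\mathscr F\to_{Z^W}f$. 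Since initiality is transitive and epispaces are exactly the pseudospaces initial with respect to sources into powers of topological spaces, $Z^W\in\epitop$. Granting the lemma: if $Y\in\epitop$ is initial with respect to $(g_j\colon Y\to Z_j^{Y_j})_j$ with $Y_j,Z_j\in\top$, then $(-)^X$ preserves this initial source — a set map $k\colon W\to\pstop(X,Y)$ is continuous iff its transpose $\check k\colon W\times X\to Y$ is, iff each $g_j\circ\check k$ is (initiality of the $g_j$), iff each $(g_j)^X\circ k$ is (its transpose being $(g_j)^X\circ k$) — so $Y^X$ is initial with respect to $\bigl((g_j)^X\colon Y^X\to(Z_j^{Y_j})^X\bigr)_j$; and the exponential law $(Z_j^{Y_j})^X\cong Z_j^{Y_j\times X}$ in the cartesian closed category $\pstop$, together with the lemma applied with $Z=Z_j$ and $W=Y_j\times X$, makes these codomains epitopological, whence $Y^X\in\epitop$ by transitivity of initiality. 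This is (\ref{expo:itm:first}). Finally $\epitop$ is closed under products in $\pstop$ (again by transitivity of initiality), so the object $Y^X\in\epitop$, with $\ev$ and all transposes being $\epitop$-morphisms by fullness, witnesses that every $X\in\epitop$ is exponentiable in $\epitop$; hence $\epitop$ is cartesian closed with exponentials computed as in $\pstop$.

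For (\ref{expo:itm:second}): by \cite{EscHec2001} a locally compact Hausdorff space $X$ is exponentiable in $\top$, with $Y^X$ the compact-open topology. Since $\top\inclu\pstop$ preserves products, $Y^X\times X$ agrees in both constructs and the evaluation map, being $\top$-continuous, is $\pstop$-continuous; and the transpose of a $\pstop$-continuous $h\colon W\times X\to Y$ is $\pstop$-continuous by the usual tube-lemma argument — given $\mathscr U\to_W w$ and a subbasic compact-open neighbourhood $\langle K,V\rangle$ of $\hat h(w)$, continuity of $h$ along $\{w\}\times K$, openness of $V$ in the \emph{topological} space $Y$, and compactness of $K$ inside the locally compact $X$ produce a member of $\mathscr U$ carried by $\hat h$ into $\langle K,V\rangle$. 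So the compact-open topology, viewed in $\pstop$, satisfies the universal property of the $\pstop$-exponential and hence, by uniqueness, equals $Y^X$; in particular $Y^X\in\top$. (Equivalently, this is the classical fact that the continuous-convergence structure on $C(X,Y)$ is topological, and is the compact-open topology, when $X$ is locally compact Hausdorff.)

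I expect the main obstacle to be the lemma in the second step — specifically, checking that the probe source $\{\phi^*\}$ is initial, which hinges on the topological probe genuinely detecting convergence in the possibly non-topological $W$ (hence the reduction to ultrafilters refining $\mathscr G$) and on keeping straight the filter-pushforward identities such as $\ev^T\circ(\phi^*\times\id_T)=\ev^W$ and $(\hat h\times\id_X)_*(\mathscr F\times\mathscr G)=\hat h_*\mathscr F\times\mathscr G$. Once that lemma and the routine verification that $(-)^X$ preserves initial sources are in hand, the exponential law and transitivity of initiality do the rest.
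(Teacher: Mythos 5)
Your argument is essentially correct, but it takes a genuinely different route from the paper: Proposition~\ref{prop:expo} is stated there as a summary of known results, with no proof given beyond pointers to the literature (cartesian closedness of $\pstop$ and the continuous-convergence exponential from \cite{HerColSch1991}, the fact that $Y^X\in\epitop$ whenever $Y\in\epitop$ from \cite{Ant1966}, \cite{Mac1973}, \cite{Bou1975}, and exponentiability of locally compact Hausdorff spaces in $\top$ with the compact-open topology from \cite{EscHec2001}). You instead reconstruct everything inside the filter formalism: direct verification of the universal property in $\pstop$, the key lemma that $Z^W\in\epitop$ for $Z\in\top$ and arbitrary $W\in\pstop$ via the initial source of probes $\phi^*\colon Z^W\to Z^T$, preservation of initial sources by $(-)^X$ plus the exponential law to get item~(\ref{expo:itm:first}), and the compact-open/continuous-convergence comparison for item~(\ref{expo:itm:second}). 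This is in substance the Antoine--Machado--Bourdaud argument that the paper outsources, and the trade-off is clear: the citation route is economical, while yours is self-contained and makes visible exactly where the ultrafilter description of $\pstop$ is used. Two points you gloss over do hold but deserve a line each: the neighbourhood filter of $w$ in your probe $T$ is really $\mathscr{G}\wedge\dot{w}$ rather than $\mathscr{G}$, and continuity of $\id\colon T\to W$ follows because an ultrafilter refining $\mathscr{G}\wedge\dot{w}$ either equals $\dot{w}$ or refines $\mathscr{G}$ (then convergence for the coarser filter $\mathscr{G}\wedge\dot{w}$ still yields what you need, since refinement preserves convergence); and continuity of $\ev$ must be checked against arbitrary ultrafilters on the product, not just product filters, which reduces to your case because such an ultrafilter refines the product of its projections. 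Also, the probe family is a proper class, so one should either restrict to probes on $|W|$ (as your construction implicitly does) or remark that this is harmless. Finally, local compactness is not actually needed for your tube-lemma half (continuous convergence refines compact-open for all topological $X,Y$); it is needed only for continuity of $\ev$ with the compact-open topology, which you correctly take from the classical theory.
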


The next proposition shows that the inclusions $\top\inclu\epitop\inclu\pstop$ admit left adjoints.

\begin{prop}\label{prop:reflectors}
There are concrete reflectors
$$\top\xleftarrow{R_2}\epitop\xleftarrow{R_1}\pstop\;.$$
This means, say for $R_1$, that for each $X\in\pstop$ there is $R_1 X\in\epitop$ such that: $|X|=|R_1 X|$, the identity map $\id\colon X\to R_1 X$ is continuous, and for each continuous map $f\colon X\to Y$ with $Y\in\epitop$ the map $f\colon R_1 X\to Y$ is continuous as well. Analogously for $R_2$ and for the composition $R \coloneqq R_2 R_1$. Moreover, $R_2$ is the restriction of $R$ to $\epitop$, and $R$ admits the following simple description: for $X\in\pstop$, a subset $S\subset |X|$ is open in $R X$ iff whenever $\mathscr{U}\to x\in S$ we have $S\in\mathscr{U}$. 
\end{prop}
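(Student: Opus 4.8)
\emph{Approach.} The statement assembles three standard facts, and I will treat them in turn: the existence of $R_1$; the \emph{direct} construction of $R$ from the explicit open-set description; and the identification $R_2=R|_{\epitop}$ together with $R=R_2R_1$. The one observation used repeatedly is that a concrete reflection is unique on the nose: if two objects $A,A'$ with underlying set $|X|$ both solve the universal problem ``reflect $X$ into a given full subcategory'' with the identity as unit, then $\id\colon A\to A'$ and $\id\colon A'\to A$ are both continuous, hence $A=A'$.

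\emph{The reflector $R_1$.} By definition, every epispace carries the initial pseudotopology with respect to some source of maps into topological exponentials ${Z}^{Y}$; since an initial source followed by initial sources is again initial, $\epitop$ is closed under the formation of initial sources in $\pstop$, and is therefore concretely reflective (standard for topological constructs; see \cite{AdaHerStr2004}). Concretely, $R_1X$ carries the initial pseudotopology on $|X|$ relative to the source of all continuous maps out of $X$ into epispaces: unfolding each such target as an initial source into topological exponentials shows $R_1X\in\epitop$; that $\id\colon X\to R_1X$ is continuous is the defining property of an initial source; and the universal property is immediate, since any continuous $g\colon X\to E$ with $E\in\epitop$ is, tautologically, among the defining maps of $R_1X$. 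The defining source is a proper class, but by well-fiberedness the pseudotopological structures on $|X|$ form a set, hence a complete lattice, and the required initial structure is their appropriate meet.

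\emph{The reflector $R$.} Call $S\subset|X|$ open if $\mathscr{U}\to_X x\in S$ forces $S\in\mathscr{U}$; this family contains $\emptyset$ and $|X|$ and is closed under arbitrary unions and finite intersections, hence is a topology $\tau$ on $|X|$, and we put $RX\coloneqq(|X|,\tau)$, an object of $\top\inclu\epitop\inclu\pstop$. Both remaining verifications reduce to the fact that in a topological space a filter converges to $x$ precisely when it contains every open neighbourhood of $x$. First, $\id\colon X\to RX$ is continuous: if $\mathscr{U}\to_X x$ and $S$ is $\tau$-open with $x\in S$, then $S\in\mathscr{U}$ by definition, so $\mathscr{U}\to_{RX}x$. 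Second, any continuous $f\colon X\to T$ with $T\in\top$ is continuous as a map $RX\to T$: for $V$ open in $T$ and $\mathscr{U}\to_X x\in f^{-1}(V)$, continuity of $f$ gives $f_*\mathscr{U}\to_T f(x)$, hence $V\in f_*\mathscr{U}$, i.e.\ $f^{-1}(V)\in\mathscr{U}$, so $f^{-1}(V)$ is $\tau$-open. The same computation makes $R$ functorial with identity-on-underlying-sets unit, so $R$ is the concrete reflector $\pstop\to\top$ with the stated description.

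\emph{Identification.} Put $R_2\coloneqq R|_{\epitop}$. For $X\in\epitop$, $RX$ lies in $\top\subset\epitop$, its unit is continuous in $\epitop$, and every continuous map from $X$ to a topological space remains continuous on $RX$; since objecthood in $\top$ and continuity are tested identically in $\epitop$ and in $\pstop$, $R_2$ is the concrete reflector $\epitop\to\top$, and it is by construction the restriction of $R$. Finally, for $X\in\pstop$, the object $R(R_1X)$ lies in $\top$, the composite $X\to R_1X\to R(R_1X)$ is continuous, and any continuous map from $X$ to a topological space is continuous on $R_1X$ (universality of $R_1$, since $\top\subset\epitop$) and hence on $R(R_1X)$ (universality of $R$); thus $R(R_1X)$ solves the same universal problem as $RX$, so $R_2R_1X=R(R_1X)=RX$ by the uniqueness noted at the outset. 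There is no genuine obstacle in any of this; the only points deserving attention are the set-theoretic remark in the construction of $R_1$ and the reconciliation of the two descriptions of $R$, both of which are settled by uniqueness of concrete reflections.
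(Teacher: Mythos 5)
Your proposal is correct and uses the same ingredients as the paper's sketch: the explicit open-set description is verified directly to give the concrete reflector $R$ onto $\top$, $R_2$ is obtained by restriction, and $R_1$ comes from the fact that $\epitop$ is initially closed in $\pstop$ (your explicit initial-lift construction is precisely how the cited result in \cite{AdaHerStr2004} is proved), with amnesticity/uniqueness of concrete reflections reconciling $R$ with $R_2R_1$. The only difference is presentational: you run the argument forwards and fill in the details (including the set-theoretic remark and the composition-of-initial-sources step) that the paper delegates to its references.
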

\begin{proof}[Sketch of proof]
We proceed backwards: it can be checked that the given description of $R$ defines a concrete reflector, so its restriction $R_2$ to $\epitop$ is also a concrete reflector. The existence of a concrete reflector $R_1$ satisfying $R = R_2 R_1$ relies on the fact that $\epitop$ is initially closed in $\pstop$ \cite{Mac1973} \cite[Prop.\@ 21.31]{AdaHerStr2004} and the fact that all constructs considered here are amnestic \cite{AdaHerStr2004}.
\end{proof}

Notice that each reflector is the identity functor when restricted to its respective image category. By the general theory of topological constructs \cite{AdaHerStr2004} one of the consequences of Prop.\@ \ref{prop:reflectors} is the following.

\begin{cor}\label{cor:preservation}
The inclusions $\top\inclu\epitop\inclu\pstop$ preserve initial sources and the reflectors $\top\xleftarrow{R_2}\epitop\xleftarrow{R_1}\pstop$ preserve final sinks.
\end{cor}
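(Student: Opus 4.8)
The plan is to deduce both assertions from standard facts about concretely reflective full subcategories of amnestic, well-fibered topological constructs, for which one may ultimately appeal to \cite{AdaHerStr2004}; the two needed arguments are short enough that I would spell them out directly, using only Prop.\ \ref{prop:reflectors}.

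\emph{Initial sources.} I would first prove the stronger statement that $\epitop$ is initially closed in $\pstop$, and $\top$ initially closed in $\epitop$: that an initial lift, taken in the larger construct, of a structured source whose codomains lie in the subcategory is already an object of the subcategory. For a full subcategory this is equivalent to the inclusion preserving initial sources, since initial lifts are unique by amnesticity. So let $(f_j\colon X\to A_j)_{j\in J}$ be a structured source in $\pstop$ with all $A_j\in\epitop$, where $X$ carries the initial $\pstop$-structure. Using Prop.\ \ref{prop:reflectors}, consider the reflection $\id\colon X\to R_1X$. Each $f_j\colon X\to A_j$ factors through $R_1X$ because $A_j$ is an epispace, and since the reflection map is the identity on underlying sets this says exactly that each $f_j\colon R_1X\to A_j$ is continuous. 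Initiality of $X$ then makes $\id\colon R_1X\to X$ continuous, while $\id\colon X\to R_1X$ is continuous by construction, so amnesticity forces $X=R_1X\in\epitop$. The same argument with $R_2$ in place of $R_1$ handles $\top\inclu\epitop$.

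\emph{Final sinks.} Here I would verify the defining property of a final sink by hand, testing against objects of the target subcategory. Let $(f_j\colon X_j\to X)_{j\in J}$ be a final sink in $\pstop$; since $R_1$ is concrete, $R_1f_j=f_j$ on underlying sets, and one must show that $R_1X$ carries the final $\epitop$-structure for the structured sink $(f_j\colon|R_1X_j|\to|R_1X|)_{j\in J}$. Fix $Z\in\epitop$ and a set map $g\colon|R_1X|\to|Z|$. Because $Z$ is an epispace and $R_1$ is a reflector, $g$ is continuous $R_1X\to Z$ iff $g$ is continuous $X\to Z$; likewise each $g\circ f_j$ is continuous $R_1X_j\to Z$ iff it is continuous $X_j\to Z$. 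Finality of $(f_j)$ in $\pstop$ says $g\colon X\to Z$ is continuous iff every $g\circ f_j\colon X_j\to Z$ is. Chaining these equivalences gives: $g\colon R_1X\to Z$ continuous iff every $g\circ f_j\colon R_1X_j\to Z$ continuous, which is exactly the claimed finality. The identical argument handles $R_2$, and the case of $R=R_2R_1$ then follows by composition.

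I expect no genuine obstacle here. The only points that need care are the systematic use of amnesticity (with well-fiberedness) to pass from ``simultaneously finer and coarser'' to ``equal'', and the repeated use, via Prop.\ \ref{prop:reflectors}, of the fact that the reflection maps are carried by identity functions on underlying sets.
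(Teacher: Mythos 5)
Your proposal is correct and follows essentially the same route as the paper, which simply invokes the general theory of topological constructs \cite{AdaHerStr2004} as a consequence of Prop.~\ref{prop:reflectors}: your two verifications (concrete reflectivity plus amnesticity gives initial closedness of $\epitop$ in $\pstop$ and of $\top$ in $\epitop$, hence preservation of initial sources; and the universal property of the reflection combined with finality in the larger construct gives preservation of final sinks) are exactly the standard arguments behind that citation. No gaps worth flagging.
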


We mention in passing that by \cite{SchWec1992} quotient maps in $\epitop$ (resp., $\pstop$) between topological spaces correspond exactly to product-stable (resp., pullback-stable) quotient maps in $\top$.

\section{A pasting lemma in \texorpdfstring{$\pstop$}{PsTop} and \texorpdfstring{$\epitop$}{EpiTop}}\label{sec:pasting}
In this section we state and prove a generalization to $\pstop$ (and hence also to $\epitop$) of the pasting lemma in $\top$ which we now recall (see e.g.~\cite[Chap~III, Thm~9.4]{Dug1966}).

\begin{lemma}[Pasting Lemma in $\top$]\label{lemma:pasting_top}
Let $X$ be a topological space and $\{X_j\}_{j\in J}$ a cover of $X$ such that either
\begin{enumerate}
\item all $X_j$ are open, or
\item all $X_j$ are closed, and form a locally finite family\footnote{We recall that a family of subsets of a topological space is locally finite if each $x\in X$ has a neighborhood intersecting only finitely many of them.}.
\end{enumerate}
If $Y$ is a topological space and $f\colon X\to Y$ is a function such that each restriction $f_j\colon X_j\to Y$ is continuous, where each $X_j$ carries the subspace topology, then $f$ is continuous.
\end{lemma}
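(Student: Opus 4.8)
The plan is to verify continuity of $f$ straight from the definition by examining preimages of open and of closed sets, treating the two alternatives of the hypothesis separately.

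For case (1) I would check that $f^{-1}(V)$ is open in $X$ for every open $V\subseteq Y$. Since $\{X_j\}_{j\in J}$ covers $X$, we have $f^{-1}(V)=\bigcup_{j\in J}\bigl(f^{-1}(V)\cap X_j\bigr)=\bigcup_{j\in J}f_j^{-1}(V)$. Continuity of each $f_j$ makes $f_j^{-1}(V)$ open in the subspace $X_j$, and since $X_j$ is open in $X$, any subset of $X_j$ that is open in $X_j$ is open in $X$. Hence $f^{-1}(V)$ is a union of open subsets of $X$, so it is open.

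For case (2) I would dually check that $f^{-1}(C)$ is closed in $X$ for every closed $C\subseteq Y$. Again $f^{-1}(C)=\bigcup_{j\in J}f_j^{-1}(C)$, and each $f_j^{-1}(C)$ is closed in $X_j$, hence closed in $X$ because $X_j$ is closed in $X$. Two facts then finish the proof: first, the family $\{f_j^{-1}(C)\}_{j\in J}$ is again \emph{locally finite}, because $f_j^{-1}(C)\subseteq X_j$ and so any neighborhood of a point meeting only finitely many of the $X_j$ meets only finitely many of the $f_j^{-1}(C)$; second, a locally finite union of closed sets is closed. The second fact reduces, via the defining neighborhood that meets only finitely many members, to the statement that closure commutes with locally finite unions, which in turn reduces to the trivial finite case. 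Applying it to $\{f_j^{-1}(C)\}_{j\in J}$ shows $f^{-1}(C)$ is closed.

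The one genuinely non-formal ingredient — and hence the only place that could be called an obstacle — lies in case (2): the step from ``each $f_j^{-1}(C)$ is closed'' to ``$\bigcup_{j}f_j^{-1}(C)$ is closed'', which fails without local finiteness and is precisely why that hypothesis is imposed; everything else is merely transitivity of the subspace relation for open and for closed subsets. I would also record the equivalent pointwise formulation, since it is the one that transcribes to $\pstop$ with convergent ultrafilters in place of neighborhoods: given $x\in X$ and a neighborhood $V$ of $f(x)$, in case (1) pull back a neighborhood of $x$ through some $f_j$ with $x\in X_j$, which is automatically a neighborhood in $X$; in case (2), start from a neighborhood $N$ of $x$ meeting only $X_{j_1},\dots,X_{j_n}$ (whence $N\subseteq X_{j_1}\cup\dots\cup X_{j_n}$), intersect it with the open complements of those $X_{j_i}$ not containing $x$ and with the finitely many pullbacks of $V$ through the remaining $f_{j_i}$, and check that the resulting neighborhood of $x$ is carried into $V$.
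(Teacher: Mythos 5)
Your core argument is correct and complete: in case (1) you write $f^{-1}(V)=\bigcup_j f_j^{-1}(V)$ and use that a relatively open subset of an open subspace is open in $X$; in case (2) you use that a relatively closed subset of a closed subspace is closed in $X$, that $\{f_j^{-1}(C)\}_{j\in J}$ inherits local finiteness from $\{X_j\}_{j\in J}$, and that a locally finite union of closed sets is closed. Note, however, that the paper does not prove Lemma \ref{lemma:pasting_top} at all: it recalls it with a citation to Dugundji and instead proves the generalization in $\pstop$ (Lemma \ref{lemma:pasting_pstop}), recovering the topological statement as a special case (Remark \ref{rmk:pasting_epi}). So your route is genuinely different from the one the paper leans on. You argue globally through preimages of open and closed sets, which is the quickest proof in $\top$ but has no analogue in $\pstop$, where there is no lattice of open sets; the paper's proof is pointwise and filter-theoretic: given an ultrafilter converging to $x$, it isolates (using local finiteness and an open $V\ni x$ covered by finitely many $X_{j_i}$) one cover element $X_k$ meeting every set of the ultrafilter, pulls the ultrafilter back to $X_k$, and pushes forward along $f_k$. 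That is essentially the ``pointwise formulation'' you sketch at the end, and it is the version that transfers to $\pstop$ and $\epitop$. One small caution about that closing sketch: in case (2) the pullback $f_{j_i}^{-1}(V)$ is only relatively open in $X_{j_i}$, so intersecting your neighborhood $N$ with it need not produce a neighborhood of $x$ in $X$; you must first choose open sets $W_i$ of $X$ with $x\in W_i$ and $W_i\cap X_{j_i}\subseteq f_{j_i}^{-1}(V)$ and intersect with those. This does not affect your actual proof, which is the preimage one, but it is exactly the fiddly step the ultrafilter argument of Lemma \ref{lemma:pasting_pstop} is designed to avoid.
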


\begin{lemma}[Pasting Lemma in $\pstop$]\label{lemma:pasting_pstop}
Let $X$ be a pseudospace and $\{X_j\}_{j\in J}$ a cover of $X$ such that either
\begin{enumerate}
\item \label{pasting:itm:first} all $X_j$ are open in the reflected topological space $R X$, or
\item \label{pasting:itm:second} all $X_j$ are closed in $R X$, and form a locally finite family in $R X$.
\end{enumerate}
If $Y$ is a pseudospace and $f\colon X\to Y$ is a function such that each restriction $f_j\colon X_j\to Y$ is continuous, where each $X_j$ carries the subspace pseudotopology, then $f$ is continuous.
\end{lemma}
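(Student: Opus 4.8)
The plan is to check continuity of $f$ pointwise straight from the ultrafilter definition of morphisms in $\pstop$, reducing in each case to the continuity of a single restriction $f_j$. Two ingredients drive everything. The first is the explicit description of the reflector $R$ from Proposition~\ref{prop:reflectors}: a subset $S\subseteq|X|$ is open in $RX$ precisely when $S\in\mathscr{U}$ for every ultrafilter $\mathscr{U}\to_X x$ with $x\in S$. This is the bridge that converts the topological hypotheses on the cover (openness, closedness, local finiteness \emph{in $RX$}) into membership statements about convergent ultrafilters. The second is the elementary fact that if $\mathscr{U}\in U(X)$ and $A\in\mathscr{U}$, then the trace $\mathscr{U}|_A\coloneqq\{B\subseteq A\mid B\in\mathscr{U}\}$ is an ultrafilter on $A$ whose pushforward along the inclusion $\iota_A\colon A\hookrightarrow X$ satisfies $(\iota_A)_*(\mathscr{U}|_A)=\mathscr{U}$.

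I would first isolate the reduction step common to both cases: \emph{if $x\in X_j$, $\mathscr{U}\to_X x$ and $X_j\in\mathscr{U}$, then $f_*\mathscr{U}\to_Y f(x)$}. Indeed, by the identity above $(\iota_j)_*(\mathscr{U}|_{X_j})=\mathscr{U}\to_X x$, which by the very definition of the subspace (initial) pseudotopology on $X_j$ means $\mathscr{U}|_{X_j}\to_{X_j}x$; continuity of $f_j$ at $x$ then gives $f_*\mathscr{U}=(f_j)_*(\mathscr{U}|_{X_j})\to_Y f(x)$, using $(f_j)_*=f_*\circ(\iota_j)_*$.

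Case \eqref{pasting:itm:first} is then immediate: given $x\in X$ and $\mathscr{U}\to_X x$, choose $j$ with $x\in X_j$ (possible since $\{X_j\}$ covers $X$); openness of $X_j$ in $RX$ forces $X_j\in\mathscr{U}$, and the reduction step applies. For case \eqref{pasting:itm:second}, given $x\in X$ and $\mathscr{U}\to_X x$, local finiteness in $RX$ supplies an $RX$-open $O\ni x$ meeting only finitely many members, say $X_{j_1},\dots,X_{j_n}$; since the family covers $X$ we have $O\subseteq\bigcup_{k=1}^n X_{j_k}$, and $O\in\mathscr{U}$, so $\bigcup_{k=1}^n X_{j_k}\in\mathscr{U}$. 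Because $\mathscr{U}$ is an ultrafilter, $X_{j_k}\in\mathscr{U}$ for some $k$. On the other hand, for every index $k$ with $x\notin X_{j_k}$, closedness of $X_{j_k}$ in $RX$ makes $X\setminus X_{j_k}$ an $RX$-open neighborhood of $x$, whence $X\setminus X_{j_k}\in\mathscr{U}$ and $X_{j_k}\notin\mathscr{U}$. Hence the index $k$ found above has $x\in X_{j_k}$, and the reduction step again finishes the proof.

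I do not expect a genuine obstacle; the one thing that needs care is the bookkeeping — committing to ultrafilters throughout (legitimate, since convergence, continuity and subspace structures in $\pstop$ are all phrased via ultrafilters) and keeping the reflector $R$ firmly in the role of translator between the topological data of the cover and the filter-theoretic computations. The trace identity $(\iota_A)_*(\mathscr{U}|_A)=\mathscr{U}$ is the small technical point that makes the subspace pseudotopology mesh cleanly with the ambient one.
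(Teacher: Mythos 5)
Your proof is correct and takes essentially the same route as the paper's: your trace ultrafilter $\mathscr{U}|_{X_j}$ with $(\iota_j)_*(\mathscr{U}|_{X_j})=\mathscr{U}$ is exactly the pullback $j^*\mathscr{U}$ of Lemma~\ref{lemma:pullback}, and both arguments reduce continuity at $x$ to finding a member of the cover that contains $x$ and belongs to $\mathscr{U}$, then restricting and pushing forward along the continuous $f_j$. The only cosmetic difference is in case~\eqref{pasting:itm:second}, where you get $X_{j_k}\in\mathscr{U}$ directly from the ultrafilter property applied to $O\subseteq\bigcup_{k}X_{j_k}$ and exclude indices with $x\notin X_{j_k}$ via their open complements, whereas the paper reaches the same conclusion through a short contradiction argument combined with Lemma~\ref{lemma:pullback}.
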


\begin{rmk}\label{rmk:pasting_epi}
Lemma \ref{lemma:pasting_pstop} applies verbatim to $\epitop$ by Cor.\@ \ref{cor:preservation}. When the pseudotopologies of $X$ and $Y$ are topological, we recover Lemma \ref{lemma:pasting_top}.
\end{rmk}

Before proving Lemma \ref{lemma:pasting_pstop} we need to recall the notion of pullback for filters. In contrast to pushforwards, the pullback of a filter does not always exist. However, we have the following.

\begin{defn}
Let $f\colon X\to Y$ be a map between sets and let $\mathscr{F}$ be a filter on $Y$. If $f^{-1}(F)\neq\emptyset$ for each $F\in\mathscr{F}$ then we define $f^*\mathscr{F}$ as the filter generated by the filter base $\{f^{-1}(F)\mid F\in\mathscr{F}\}$.
\end{defn}

\begin{lemma}\label{lemma:pullback}
When defined, the filter $f^*\mathscr{F}$ satisfies the formula $\mathscr{F}\subset f_*f^*\mathscr{F}$. In particular, if $\mathscr{F}$ is an ultrafilter and $f^*\mathscr{F}$ is defined then $f_*f^*\mathscr{F}=\mathscr{F}$, which implies $f(X)\in\mathscr{F}$.
\end{lemma}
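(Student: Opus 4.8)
The plan is to unwind the definitions directly; the statement is essentially a bookkeeping exercise about filter bases, so no substantial machinery is needed.

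First I would recall the explicit description of the pushforward stated in Section~\ref{sec:presentation}, namely $f_*\mathscr{G}=\{S\subset Y\mid f^{-1}(S)\in\mathscr{G}\}$. To prove the inclusion $\mathscr{F}\subset f_*f^*\mathscr{F}$, I would take an arbitrary $F\in\mathscr{F}$ and observe that $f^{-1}(F)$ is, by construction, a member of the filter base generating $f^*\mathscr{F}$, hence $f^{-1}(F)\in f^*\mathscr{F}$; by the description of the pushforward this says precisely $F\in f_*f^*\mathscr{F}$. Alternatively, one notes that $f_*f^*\mathscr{F}$ is generated by the base $\{f(f^{-1}(F))\mid F\in\mathscr{F}\}$ and that $f(f^{-1}(F))=F\cap f(X)\subset F$, which yields the inclusion as well and has the bonus of exhibiting $f(X)=f(f^{-1}(Y))$ as a member of $f_*f^*\mathscr{F}$.

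Next, for the ultrafilter case, I would first check that $f_*f^*\mathscr{F}$ is a proper filter: $f^*\mathscr{F}$ is proper precisely because of the standing hypothesis $f^{-1}(F)\neq\emptyset$ for all $F\in\mathscr{F}$, and the pushforward of a proper filter is again proper since $f^{-1}(\emptyset)=\emptyset$. Then $f_*f^*\mathscr{F}$ is a proper filter containing the ultrafilter $\mathscr{F}$, so by maximality of ultrafilters $f_*f^*\mathscr{F}=\mathscr{F}$. Finally $f(X)\in\mathscr{F}$ follows either from the identity $f(f^{-1}(Y))=f(X)$ noted above, or simply because $f^{-1}(f(X))=X\in f^*\mathscr{F}$ forces $f(X)\in f_*f^*\mathscr{F}=\mathscr{F}$.

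I do not expect any real obstacle here; the only point requiring a moment's care is verifying properness of $f_*f^*\mathscr{F}$, so that the maximality argument for ultrafilters applies — and that is exactly where the hypothesis that $f^*\mathscr{F}$ be defined enters.
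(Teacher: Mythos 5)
Your proof is correct and follows essentially the same route as the paper's: unwind the explicit description of the pushforward to get $\mathscr{F}\subset f_*f^*\mathscr{F}$, note $f^{-1}(f(X))=X$ to see $f(X)\in f_*f^*\mathscr{F}$, and invoke maximality in the ultrafilter case. Your explicit check that $f_*f^*\mathscr{F}$ is proper is a careful touch the paper leaves implicit (it is automatic since $f^*\mathscr{F}$ is generated by a base of nonempty sets), but it does not change the argument.
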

\begin{proof}
We compute: $f_*f^*\mathscr{F}=\{S\subset Y\mid f^{-1}(S)\supset f^{-1}(F)\text{ for some }F\in\mathscr{F}\}$. Obviously $\mathscr{F}\subset f_*f^*\mathscr{F}$. Notice that $f(X)\in f_*f^*\mathscr{F}$ since $f^{-1}f(X)=X$. When $\mathscr{F}$ is an ultrafilter, by the maximality of $\mathscr{F}$ we have equality.
\end{proof}

\begin{proof}[Proof of Lemma \ref{lemma:pasting_pstop}]
We first assume that all $X_j$ are open in $R X$. Take $\mathscr{U}\to x$ with $x\in X_j$ for some $j$. By openness we have $X_j\in\mathscr{U}$, so every $U\in\mathscr{U}$ has non-empty intersection with $X_j$ and by Lemma \ref{lemma:pullback} applied to the inclusion map $j\colon X_j\hookrightarrow X$ we have $j_* j^*\mathscr{U}=\mathscr{U}$. Therefore $j^*\mathscr{U}\to x$ in $X_j$ by the definition of subspace pseudotopology, and by the continuity of $f_j$ we have ${f_j}_*(j^*\mathscr{U})=f_*(j_* j^*\mathscr{U})=f_*(\mathscr{U})\to f(x)$ which proves $f$ is continuous. This proves the lemma for case \eqref{pasting:itm:first}. Now we assume case \eqref{pasting:itm:second}, that is, all $X_j$ are closed in $R X$ and form a locally finite family in $R X$. Take $\mathscr{U}\to x$. By assumption there is an open set $V$ containing $x$ and covered by finitely many closed sets $X_j$, call them $X_{j_1}, X_{j_2}, \dots, X_{j_n}$. We now show by contradiction that at least one among $X_{j_1}, \dots, X_{j_n}$ intersects each $U\in\mathscr{U}$: indeed, suppose that for each $i=1,\dots, n$ there is $U_i\in\mathscr{U}$ such that $X_{j_i}\cap U_i=\emptyset$. Then $(\cap_{i=1}^n U_i)\cap (\cup_{i=1}^n X_{j_i})=\emptyset$ which implies $(\cap_{i=1}^n U_i)\cap V=\emptyset$ since $V$ is covered by $\cup_{i=1}^n X_{j_i}$. But $\cap_{i=1}^n U_i$ belongs to $\mathscr{U}$ as a finite intersection of elements of $\mathscr{U}$, and $V$ belongs to $\mathscr{U}$ because $\mathscr{U}\to x\in V$ with $V$ open, therefore $(\cap_{i=1}^n U_i)\cap V\neq\emptyset$ and we have the desired contradiction. In order to fix notation, say $X_k$ has non-empty intersection with all $U\in\mathscr{U}$. By Lemma \ref{lemma:pullback} we have $k_* k^*\mathscr{U}=\mathscr{U}$ and $X_k\in\mathscr{U}$. By closedness $x\in X_k$, so $k^*\mathscr{U}\to x$ in $X_k$. Finally, by the continuity of $f_k$ we have ${f_k}_*(k^*\mathscr{U})=f_*(k_* k^*\mathscr{U})=f_*(\mathscr{U})\to f(x)$ which proves $f$ is continuous.
\end{proof}

\begin{rmk}
One is tempted to extend Lemma \ref{lemma:pasting_pstop} to larger cartesian closed topological constructs such as\footnote{These are constructs consisting of sets equipped with a choice of filters for each point, much like the formulation of $\pstop$ in terms of filters except that weaker axioms are assumed for the choice of filters, see e.g.\@ \cite{ColLow2001}.} $\Lim$ or $\conv$, although our proof seems to suggest that the fact that a pseudotopology can be specified in terms of \emph{ultra}filters is crucial. In \cite[App.\@ A.2]{Pre2002} a sort of pasting lemma is proved for functions from a pretopological space to a limit space, allowing for a definition of the fundamental group in $\Lim$ (and, by recurrence, of all higher homotopy groups). However, that lemma does not ensure the continuity of loop concatenation in a loop limit space (for that we would need a pasting lemma for functions between limit spaces) and thus it does not lead to a notion of a fundamental group enriched over $\Lim$. Whether such a notion exists remains an open problem. On the other hand, the same appendix offers a counterexample to the possibility of extending any form of pasting lemma to $\conv$ (called $\kconv$ in \cite{Pre2002}) by exhibiting a convergence space $X$ for which the path concatenation of two ``path composable'' continuous maps $[0,1]\to X$ is not continuous. In other words, in $\conv$ the unit interval is not suitable to define a reasonable notion of fundamental group.
\end{rmk}

\begin{problem}
Extending Lemma \ref{lemma:pasting_pstop} to $\Lim$, or finding a counterexample to such an extension.
\end{problem}

\section{Path-component functors in \texorpdfstring{$\pstop$}{PsTop} and \texorpdfstring{$\epitop$}{EpiTop}}
We briefly recall that in $\top$ one can define the path-component endofunctor $\pctop\colon\top\to\top$ which assigns:
\begin{itemize}
\item to each space $X$ the quotient space $\pctop X$ of its path components, where the quotient topology is taken with respect to the natural projection $\quot{X}{}$ of $X$ onto the set of its path components;
\item to each continuous map $f\colon X\to Y$ between spaces the continuous map $\pctop f\colon\pctop X\to\pctop Y$ defined by $(\pctop f)(\quot{X}{}(x))\coloneqq \quot{Y}{}(f(x))$.
\end{itemize}
For each continuous map $f\colon X\to Y$ between spaces we thus have the commutative diagram
\begin{center}
\begin{tikzcd}
X\arrow[r,"f"] \arrow[d,"\quot{X}{}"]& Y\arrow[d,"\quot{Y}{}"]\\
\pctop X \arrow[r,"\pctop f"] & \pctop Y
\end{tikzcd}
\end{center}
where $\quot{X}{}$ and $\quot{Y}{}$ are quotient maps in $\top$. Given a product $\Prod_j X_j$ of spaces, there is a natural continuous bijection $\pctop \Prod_j X_j\to \Prod_j {\pctop X_j}$ which is a homeomorphism iff the product of quotient maps $\Prod_j \quot{X_j}{}\colon \Prod_j X_j\to \Prod_j \pctop X_j$ is quotient. Since in $\top$ products of quotient maps need not be quotient, it follows that $\pctop$ does not preserve products (not even finite ones).
See \cite{Bra2012} for more details.

Substituting $\top$ with $\epitop$ and $\pstop$ in the construction above, we obtain analogous endofunctors
\begin{equation*}
\begin{aligned}
\pcepi&\colon\epitop\to\epitop\\
\pcps&\colon\pstop\to\pstop
\end{aligned}
\end{equation*}
with corresponding commutative diagrams
\begin{center}
\begin{tikzcd}
X\arrow[r,"f"] \arrow[d,"\quot{X}{epi}"]& Y\arrow[d,"\quot{Y}{epi}"]&&Z\arrow[r,"g"] \arrow[d,"\quot{Z}{ps}"]& W\arrow[d,"\quot{W}{ps}"]\\
\pcepi X \arrow[r,"\pcepi f"] & \pcepi Y&&\pcps Z \arrow[r,"\pcps g"] & \pcps W
\end{tikzcd}
\end{center}
where on the left-hand side $f\colon X\to Y$ is a continuous map between epispaces and $\quot{X}{epi}$ and $\quot{Y}{epi}$ are quotient maps in $\epitop$, while on the right-hand side $g\colon Z\to W$ is a continuous map between pseudospaces and $\quot{Z}{ps}$ and $\quot{W}{ps}$ are quotient maps in $\pstop$.

The ``convenient'' properties of $\epitop$ and $\pstop$ induce better-behaved path-component functors at least with respect to products, as the next proposition shows.

\begin{prop}\label{prop:pc_products}
$\pcps$ preserves products and $\pcepi$ preserves finite products.
\end{prop}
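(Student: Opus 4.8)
The plan is to reduce the statement to the fact that products of quotient maps are again quotient maps --- for arbitrary families in $\pstop$, and for finite families in $\epitop$ --- which is precisely where the convenient properties of these constructs come in.

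First I would record, for any family $(X_j)_{j\in J}$ of pseudospaces, the canonical continuous map $\Phi\colon\pcps\Prod_j X_j\to\Prod_j\pcps X_j$ determined by the continuous maps $\pcps\mathrm{pr}_k$ induced by the projections $\mathrm{pr}_k\colon\Prod_j X_j\to X_k$, and check that $\Phi$ is a bijection of underlying sets. Surjectivity is clear (choose a point in each path component). Injectivity rests on the universal property of the product in $\pstop$: a path $[0,1]\to\Prod_j X_j$ is the same datum as a family of paths $[0,1]\to X_j$, so if two points of $\Prod_j X_j$ have, for every $j$, their $j$-th entries in a common path component of $X_j$, then assembling the connecting paths coordinatewise exhibits a path between the two points themselves. (That ``joined by a path'' is an equivalence relation in the first place is where Lemma~\ref{lemma:pasting_pstop} is used, via concatenation of paths.) The analogous map $\Phi^{\mathsf{epi}}$ for $\pcepi$ is a continuous bijection by the same argument.

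Next I would exploit the factorization $\Prod_j\quot{X_j}{ps}=\Phi\circ\quot{\Prod_j X_j}{ps}$, in which $\quot{\Prod_j X_j}{ps}$ is a quotient map by construction of $\pcps$. The usual triangle argument then gives: if $\Prod_j\quot{X_j}{ps}$ is a quotient map, then $\Phi$ is a final morphism, hence --- being bijective --- an isomorphism; conversely, if $\Phi$ is an isomorphism then $\Prod_j\quot{X_j}{ps}$ is a quotient map, being the composite of a quotient map with an isomorphism. Thus $\pcps$ preserves the product of $(X_j)_j$ if and only if $\Prod_j\quot{X_j}{ps}$ is a quotient map in $\pstop$, and likewise $\pcepi$ preserves a finite product precisely when the corresponding finite product of the maps $\quot{X_j}{epi}$ is a quotient map in $\epitop$.

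It remains to verify that products of quotient maps are quotient maps in the stated generality. For $\epitop$ this is the standard consequence of cartesian closedness (Prop.~\ref{prop:expo}): given a quotient map $q\colon X\to Y$ and an exponentiable object $A$, exponential transposition identifies continuity of a set map $h\colon Y\times A\to Z$ with that of $\bar h\colon Y\to Z^A$, and continuity of $h\circ(q\times\id_A)$ with that of $\bar h\circ q$, so finality of $q$ forces finality of $q\times\id_A$; composing such maps handles all finite products and finishes the $\pcepi$ half. For $\pstop$ one needs the stronger fact that \emph{arbitrary} products of quotient maps are quotient maps --- a hallmark of $\pstop$ as a topological universe --- which I would establish directly with ultrafilters. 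After unravelling the final and product pseudotopologies, the sole nontrivial point is this: if an ultrafilter $\mathscr{V}$ converges to $(y_j)_j$ in $\Prod_j Y_j$, choose for each $j$, by finality of the quotient map $q_j\colon X_j\to Y_j$, an ultrafilter $\mathscr{U}_j$ on $X_j$ with $\mathscr{U}_j\to x_j$, $q_j(x_j)=y_j$ and $(q_j)_*\mathscr{U}_j=(\mathrm{pr}_j)_*\mathscr{V}$; then the filter on $\Prod_j X_j$ generated by the sets $(\Prod_j q_j)^{-1}(V)$ with $V\in\mathscr{V}$ together with the sets $(\mathrm{pr}_j)^{-1}(U)$ with $U\in\mathscr{U}_j$ has the finite intersection property --- this uses surjectivity of the $q_j$ and the compatibility $(q_j)_*\mathscr{U}_j=(\mathrm{pr}_j)_*\mathscr{V}$ --- and any ultrafilter $\mathscr{W}$ refining it satisfies $(\mathrm{pr}_j)_*\mathscr{W}=\mathscr{U}_j$ for all $j$ and $(\Prod_j q_j)_*\mathscr{W}=\mathscr{V}$, since pushforwards of ultrafilters are ultrafilters and hence the inclusions upgrade to equalities. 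Therefore $\mathscr{V}$ converges in the final pseudotopology and $\Prod_j q_j$ is a quotient map. This ultrafilter amalgamation is the crux of the proposition; everything else is formal.
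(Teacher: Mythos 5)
Your proposal is correct and takes essentially the same route as the paper: reduce preservation of (finite) products by $\pcps$ and $\pcepi$, via the canonical continuous bijection, to product-stability of quotient maps, deducing the $\epitop$ case from cartesian closedness exactly as the paper indicates. The only difference is that where the paper cites \cite{BenHerLow1991} for the fact that arbitrary products of quotient maps are quotient in $\pstop$, you prove it directly by the ultrafilter amalgamation argument, which is sound (including the corner case where some $(\mathrm{pr}_j)_*\mathscr{V}$ is the principal ultrafilter $\dot{y}_j$, handled by surjectivity of $q_j$).
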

\begin{proof}
In $\pstop$ products of quotient maps are quotient, see \cite[Theorem~31]{BenHerLow1991}. In $\epitop$ finite products of quotient maps are quotient: to prove it use the characterization of cartesian closedness for topological constructs given in \cite{Her1974} or \cite[Thm 2.14]{HerColSch1991}, together with the fact that $\epitop$ is cartesian closed.
\end{proof}

Let $X$ be a pseudospace and $\gamma\colon[0,1]\to X$ be a path in $X$. Then $R_1\gamma$ is a path in the reflected epispace $R_1 X$ and $R\gamma$ is a path in the reflected space $R X$. By Cor.\@ \ref{cor:preservation} we obtain natural continuous surjections $\quott{X}{ps}\colon R_1 \pcps X \twoheadrightarrow\pcepi R_1 X$ and $\quott{X'}{epi} \colon R_2 \pcepi X'\twoheadrightarrow \pctop R_2 X'$ for each $X\in\pstop$ and each $X'\in\epitop$. The commutative diagram in Fig.\@ \ref{fig:wardrobe} illustrates the mutual relations among $\pcps$, $\pcepi$ and $\pctop$.

\begin{center}
\begin{figure}
\begin{tikzcd}[row sep={2em,between origins}, column sep={2em,between origins}, text height=1.5ex, text depth=0.25ex, nodes in empty cells]
X \arrow[rrrrrr,"f"] \arrow[dddrr,"\id",sloped] \arrow[dddd,swap,"\quot{X}{ps}",->>] &&&&&& Y \arrow[dddd,swap,"\quot{Y}{ps}",densely dotted,->>] \arrow[dddrr,"\id",sloped] \\
\\
\\
&& R_1 X \arrow[rrrrrr,crossing over,"f" near start] &&&&&& R_1 Y \arrow[dddrr,"\id",sloped] \arrow[dddd,"\quot{Y}{ps}",densely dotted,->>] \\
\pcps X \arrow[rrrrrr, "\pcps f" near end,densely dotted,swap] \arrow[dddrr,"\id",sloped] \arrow[dddddddrr,bend right=12,"\quott{X}{ps}",swap,->>]&&&&&& \pcps Y \arrow[dddddddrr,bend right=12,"\quott{Y}{ps}",swap,densely dotted,->>]\arrow[dddrr, "\id" near start,densely dotted,sloped]\\
\\
&&&& R X \arrow[uuull,<-,crossing over,"\id",swap,sloped] \arrow[rrrrrr,crossing over,"f" very near start] &&&&&& R Y \arrow[dddd,"\quot{Y}{ps}",->>]\\
&& R_1\pcps X \arrow[uuuu,<<-,crossing over,"\quot{X}{ps}",swap] \arrow[rrrrrr,crossing over,"\pcps f",densely dotted,swap] \arrow[dddrr,"\id" near start,sloped]  \arrow[dddd,"\quott{X}{ps}",->>] &&&&&& R_1 \pcps Y \arrow[dddrr,"\id",densely dotted,sloped] \arrow[dddd,"\quott{Y}{ps}",densely dotted,->>] \\
\\
\\
&&&& R \pcps X \arrow[uuuu,<<-,crossing over,"\quot{X}{ps}",swap] \arrow[rrrrrr,crossing over,"\pcps f" near start] &&&&&& R \pcps Y \arrow[dddd,"\quott{Y}{ps}",->>] \\
&& \pcepi R_1 X \arrow[rrrrrr,"\pcepi f",densely dotted,swap] \arrow[dddrr,"\id" near start,sloped] \arrow[dddddddrr,bend right=12,"\quott{X}{epi}",swap,->>]&&&&&& \pcepi R_1 Y \arrow[dddrr,"\id",densely dotted,sloped] \arrow[dddddddrr,bend right=12,"\quott{Y}{epi}",swap,densely dotted,->>]\\
\\
\\
&&&& R_2 \pcepi R_1 X \arrow[rrrrrr,crossing over,"\pcepi f"] \arrow[uuuu,<<-,crossing over,"\quott{X}{ps}",swap] \arrow[dddd,"\quott{X}{epi}",->>] &&&&&& R_2 \pcepi R_1 Y \arrow[dddd,"\quott{Y}{epi}",->>]\\
\\
\\
\\
&&&& \pctop R X \arrow[rrrrrr,"\pctop f"]&&&&&& \pctop R Y
\end{tikzcd}
\caption{Mutual relations among $\pcps$, $\pcepi$ and $\pctop$.}\label{fig:wardrobe}
\end{figure}
\end{center}

\begin{problem}
Understanding for which $X\in\pstop$ the map $\quott{X}{ps}$ is quotient in $\epitop$, and analogously for which $X'\in\epitop$ the map $\quott{X'}{epi}$ is quotient in $\top$.
\end{problem}

The next proposition shows that $\pcps$ and $\pcepi$ are lifts of $\pctop$ to $\pstop$ and to $\epitop$, respectively.
\begin{prop}
The following diagram of functors is commutative:
\begin{center}
\begin{tikzcd}
\pstop\arrow[r,"\pcps"] & \pstop\arrow[d,"R_1"]\\
\epitop \arrow[u,hook] \arrow[r,"\pcepi"] & \epitop\arrow[d,"R_2"]\\
\top \arrow[u,hook] \arrow[r,"\pctop"] & \top
\end{tikzcd}
\end{center}
\label{prop:lift}
\end{prop}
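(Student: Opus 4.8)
The plan is to use that every functor appearing in the square is \emph{concrete} (it commutes with the forgetful functors to $\set$), so it suffices to check that the two composites agree on underlying sets, then on the structures of objects, and finally that the agreement on morphisms is automatic — a morphism between two fixed objects of a concrete construct being determined by its underlying set map. The set-theoretic part is immediate: since $\top\inclu\epitop\inclu\pstop$ are full subcategory inclusions and $[0,1]$ (with its usual topology) lies in $\top$, a path $[0,1]\to X$ means the same thing in all three constructs, so the partition of $|X|$ into path components — and hence the set $|\pcps X|=|\pcepi X|=|\pctop X|$ — is independent of the ambient construct. In particular both squares commute on underlying sets, and on a morphism $f$ the maps $R_1\pcps f$ and $\pcepi f$ (resp.\ $R_2\pcepi f$ and $\pctop f$) have the same underlying map, namely the one induced by $f$ on path components.

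For the top square, fix $X'\in\epitop$ and regard it as a pseudospace. By definition of $\pcps$, the space $\pcps X'$ carries the final pseudotopology with respect to the path-component projection $q\coloneqq\quot{X'}{ps}\colon X'\to\pcps X'$; thus $q$ is a quotient map in $\pstop$, i.e.\ a one-element final sink. Now apply $R_1$: by Cor.\@ \ref{cor:preservation} the reflector $R_1$ preserves final sinks, and $R_1X'=X'$ because $R_1$ restricts to the identity on $\epitop$ (Prop.\@ \ref{prop:reflectors}); hence $q\colon X'\to R_1\pcps X'$ is a final sink in $\epitop$. But the final epitopology with respect to the projection of $X'$ onto its set of path components is precisely what defines $\pcepi X'$. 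Therefore $R_1\pcps X'=\pcepi X'$ as epispaces. The bottom square is settled by the identical argument with $(R_1,\pstop,\epitop)$ replaced by $(R_2,\epitop,\top)$: for $X''\in\top$ the quotient $X''\to\pcepi X''$ is a final sink in $\epitop$, we have $R_2X''=X''$, and $R_2$ preserves final sinks, so $X''\to R_2\pcepi X''$ is a final sink in $\top$ whose underlying map is the path-component projection, which means $R_2\pcepi X''=\pctop X''$. Combined with the morphism discussion above, this yields commutativity of the whole diagram.

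I do not anticipate a real obstacle: the argument is purely formal once Cor.\@ \ref{cor:preservation} is available. The one point deserving care is that we need genuine \emph{equality} of the two composite functors, not merely a natural isomorphism — this works because the reflectors are literally the identity on their image categories and because $[0,1]\in\top$ makes the path-component sets literally coincide across the three constructs, so there are no spurious bijections to track. (Alternatively, both equalities can be extracted directly from the commutative diagram of Fig.\@ \ref{fig:wardrobe}, but the route through preservation of final sinks is more economical.)
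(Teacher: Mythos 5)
Your argument is correct and is essentially the paper's own proof spelled out in detail: the paper's one-line identifications $R_1\pcps X'=\pcepi X'$ (for $X'\in\epitop$) and $R_2\pcepi X''=\pctop X''$ (for $X''\in\top$) rest on exactly the ingredients you use, namely that the reflectors are the identity on their image categories, that fullness of the inclusions makes the path-component sets coincide, and that reflectors preserve final sinks (Cor.~\ref{cor:preservation}) applied to the path-component quotient maps. (One cosmetic remark: $\pcps$, $\pcepi$, $\pctop$ are not concrete over $\set$ since they change the underlying set, but your morphism argument only needs that a morphism between fixed objects is determined by its underlying map, which is what you actually use.)
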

\begin{proof}
When $X\in\top$ the map $\quott{X}{epi}$ becomes the identity and we obtain the identification $R_2 \pcepi X = \pctop X$. Analogously, when $X'\in\epitop$ we obtain $R_1 \pcps X' = \pcepi X'$.
\end{proof}

It is natural to ask on which $X\in\top$ these new functors coincide with the usual $\pctop$ (or, what is the same by Prop.\@ \ref{prop:lift}, on which $X\in\top$ these new functors take value in $\top$). Notice that, by Prop.\@ \ref{prop:lift}, $\pcps X\in\top$ implies $\pcepi X\in\top$. For $\pcps$ there is an interesting characterization.
\begin{prop}
For each $X\in\top$ the following are equivalent:
\begin{enumerate}
\item \label{pc:itm:first} $\pcps X\in\top$,
\item \label{pc:itm:second} $\pcps X = \pctop X$,
\item \label{pc:itm:third} the projection $\quot{X}{}\colon X\to\pctop X$ is biquotient.
\end{enumerate}
\end{prop}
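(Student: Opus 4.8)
\emph{Plan of proof.} The plan is to treat $(1)\Leftrightarrow(2)$ and $(2)\Leftrightarrow(3)$ separately, the first by means of the reflector and the second via the ultrafilter description of biquotient maps.

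For $(1)\Leftrightarrow(2)$ I would argue as follows. By construction $\pcps X$ is the final lift in $\pstop$ of the sink $\quot{X}{ps}\colon|X|\to\pi_0(X)$ (with $X$ regarded as a pseudospace), while $\pctop X$ is the final lift in $\top$ of the same underlying sink. Since $X\in\top$ we have $R X=X$, so Cor.~\ref{cor:preservation} --- the reflector $R\colon\pstop\to\top$ preserves final sinks --- gives $R\,\pcps X=\pctop X$. Now $(2)\Rightarrow(1)$ is trivial, and for $(1)\Rightarrow(2)$ one notes that if $\pcps X\in\top$ then $R$ restricts to the identity on it, whence $\pcps X=R\,\pcps X=\pctop X$.

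For $(2)\Leftrightarrow(3)$, write $q=\quot{X}{}$. By the description of final pseudotopologies recalled in Section~\ref{sec:presentation}, an ultrafilter $\mathscr{U}$ on $\pi_0(X)$ converges to $c$ in $\pcps X$ exactly when there is an ultrafilter $\mathscr{V}$ on $X$ with $q_*\mathscr{V}=\mathscr{U}$ converging (in the ordinary sense) to a point of $q^{-1}(c)$ --- the trivial convergences $\dot c\to c$ being absorbed into this, as $q$ is onto. On the other hand the quotient topology on $\pi_0(X)$ makes $q$ continuous, and the final pseudotopology is the least pseudotopology with that property; hence every ultrafilter convergence in $\pcps X$ is one in $\pctop X$, and $(2)$ holds precisely when the reverse inclusion holds, i.e.\ when every ultrafilter $\mathscr{U}$ on $\pi_0(X)$ that converges to $c$ in $\pctop X$ lifts to an ultrafilter on $X$ converging to a point of $q^{-1}(c)$. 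This is the standard ultrafilter reformulation of the assertion that $q\colon X\to\pctop X$ is biquotient; in matching it with the open-cover definition one uses Lemma~\ref{lemma:pullback} --- the pullback filter $q^*\mathscr{U}$ exists because $q$ is surjective, and any ultrafilter refining it pushes forward exactly onto $\mathscr{U}$.

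I expect the second equivalence to be the main point, and within it the routine-but-not-automatic verification that the convergence-relation condition above coincides with the classical notion of a biquotient map. An alternative for that step is to observe that $(2)$ says precisely that $q\colon X\to\pctop X$ is a quotient map in $\pstop$ between topological spaces, so that the remark at the end of Section~\ref{sec:presentation} (such maps are exactly the pullback-stable quotient maps in $\top$), together with the Day--Kelly identification of pullback-stable quotients with biquotient maps, yields $(2)\Leftrightarrow(3)$ directly.
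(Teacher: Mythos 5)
Your proposal is correct and follows the same skeleton as the paper's proof, which treats $\eqref{pc:itm:first}\iff\eqref{pc:itm:second}$ as clear and obtains $\eqref{pc:itm:second}\iff\eqref{pc:itm:third}$ by citing Kent \cite[Theorem 5]{Ken1969}. Your justification of $\eqref{pc:itm:first}\iff\eqref{pc:itm:second}$ via Cor.~\ref{cor:preservation} (so that $R\,\pcps X=\pctop X$ when $X\in\top$) together with the fact that $R$ is the identity on $\top$ is exactly what makes the paper's ``clear'' step work, and is fine. For $\eqref{pc:itm:second}\iff\eqref{pc:itm:third}$ you partially unwind Kent's theorem yourself: your explicit description of ultrafilter convergence in the quotient pseudotopology (with the trivial convergences absorbed by surjectivity) and the reduction of $\eqref{pc:itm:second}$ to ``every ultrafilter converging in $\pctop X$ lifts along $\quot{X}{}$ to an ultrafilter converging to a point of the fibre'' are correct; the remaining identification of this lifting condition with Michael's open-cover definition of biquotient is precisely the content of the result the paper cites, and you outsource it to the standard ultrafilter characterization of biquotient maps just as the paper outsources the whole equivalence to Kent, so the external load is comparable and there is no gap. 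Your alternative route --- reading $\eqref{pc:itm:second}$ as saying that $\quot{X}{}$ is a quotient map in $\pstop$ between topological spaces, then invoking the remark at the end of Section~\ref{sec:presentation} (quotients in $\pstop$ between topological spaces are the pullback-stable quotients in $\top$, after \cite{SchWec1992}) and the Day--Kelly identification of pullback-stable quotients with biquotient maps --- is genuinely different from the paper's citation and equally valid; it trades one classical convergence-theoretic reference for two categorical ones, and has the advantage of staying entirely within facts already quoted in the paper plus a classical theorem about $\top$.
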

\begin{proof}
$\eqref{pc:itm:first} \iff \eqref{pc:itm:second}$ is clear. For $\eqref{pc:itm:second} \iff \eqref{pc:itm:third}$, observe that the equation $\pcps X=\pctop X$ means that on the set of path components of $X$ the quotient pseudotopology and the quotient topology coincide. By a result of Kent \cite[Theorem 5]{Ken1969} this is equivalent to the projection $\quot{X}{}\colon X\to \pctop X$ being biquotient.
\end{proof}
We recall that a continuous surjection $f\colon X\to Y$ between spaces is said to be biquotient if, whenever $y\in Y$ and $\mathcal{O}$ is a covering of $f^{-1}(y)$ by open sets of $X$, then finitely many $f(O)$, with $O\in \mathcal{O}$, cover some neighborhood of $y$ in $Y$. Spaces for which $\quot{X}{}$ is biquotient include semilocally 0-connected spaces (each path component is open) and totally path disconnected spaces (each path component is a singleton). However, these examples are not exhaustive: the topologist's sine curve has biquotient projection but is neither semilocally 0-connected nor totally path disconnected.

\begin{problem}
Finding a topological characterization of those spaces $X$ for which the projection $\quot{X}{}\colon X\to \pctop X$ is biquotient.
\end{problem}

\begin{problem}
Characterizing those spaces $X$ for which $\pcepi X = \pctop X$.
\end{problem}

\begin{prop}\label{prop:cont_mult}
Let $X\in\pstop$ (resp., $X\in\epitop$) and $m\colon X\times X\to X$ be a continuous map. Then the induced map $\mu\colon \pcps X\times \pcps X\to \pcps X$ (resp., $\mu\colon\pcepi X\times\pcepi X\to\pcepi X$) is continuous as well.
\end{prop}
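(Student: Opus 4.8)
The plan is to realize $\mu$ as the unique factorization of a continuous map through the quotient map $\quot{X}{ps}\times\quot{X}{ps}$, and then to invoke the universal property of quotient maps. I will carry out the $\pstop$ case; the $\epitop$ case is word-for-word identical, with $\pcps$ replaced by $\pcepi$ and with the product-stability of quotient maps in $\pstop$ replaced by the stability of binary (hence finite) products of quotient maps in $\epitop$, both recorded in Prop.\ \ref{prop:pc_products} and its proof.

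First I would check that $\mu$ is well defined on the underlying sets. Suppose $a,a'\in|X|$ lie in a single path component $C$ and $b,b'\in|X|$ lie in a single path component $D$. Choosing a path $\alpha$ from $a$ to $a'$ inside $C$ and a path $\beta$ from $b$ to $b'$ inside $D$, the map $t\mapsto(\alpha(t),\beta(t))$ is continuous, since both of its coordinates are and the product in $\pstop$ has the corresponding universal property; hence $C\times D$ lies in a single path component of $X\times X$. Applying the continuous map $m$ and using that continuous images of path-connected subsets are path-connected, $m(C\times D)$ lies in a single path component of $X$. Therefore $\mu\big(\quot{X}{ps}(a),\quot{X}{ps}(b)\big):=\quot{X}{ps}\big(m(a,b)\big)$ is independent of the chosen representatives, and by construction it satisfies
\[
\mu\circ\big(\quot{X}{ps}\times\quot{X}{ps}\big)=\quot{X}{ps}\circ m .
\]

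Next I would note that the right-hand side of this identity is continuous, being a composite of continuous maps, and that $\quot{X}{ps}\times\quot{X}{ps}\colon X\times X\to\pcps X\times\pcps X$ is a product of two quotient maps, hence itself a quotient map in $\pstop$ by \cite[Theorem~31]{BenHerLow1991}; in fact, since $\pcps$ preserves products by Prop.\ \ref{prop:pc_products}, this map is canonically the quotient projection $\quot{X\times X}{ps}$ onto $\pcps(X\times X)$. Since a map out of the codomain of a quotient map is continuous as soon as its precomposition with that quotient map is, the displayed identity forces $\mu$ to be continuous. The $\epitop$ statement follows verbatim, using that binary products of quotient maps are quotient in $\epitop$.

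I do not anticipate a genuine obstacle here: once well-definedness is settled, the proof is a one-line diagram chase whose only substantive input is the product-stability of quotient maps --- precisely the convenient property that $\top$ lacks and that motivated the passage to $\epitop$ and $\pstop$ in the first place. The only points requiring any care are the routine verification that $\mu$ descends to path components and the bookkeeping that $\quot{X}{ps}\times\quot{X}{ps}$ genuinely is a quotient map in the relevant category, with the product finite in the $\epitop$ case.
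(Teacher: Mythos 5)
Your proposal is correct and follows essentially the same route as the paper: the paper's entire proof is the observation that products of (two) quotient maps are quotient in $\pstop$ and $\epitop$, which is exactly the substantive input in your diagram chase through $\quot{X}{ps}\times\quot{X}{ps}$. The extra details you supply (well-definedness on path components and the factorization identity) are the routine verifications the paper leaves implicit, so no further changes are needed.
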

\begin{proof}
It follows from the fact that in both $\pstop$ and $\epitop$ the product of two quotient maps is quotient.
\end{proof}

\begin{rmk}
In $\top$ the above result does not hold and this is the reason why $\biss$ takes value in the category of quasitopological groups, as opposed to topological groups. See the works of Brazas.
\end{rmk}

It is clear that the above treatment holds for the pointed versions of $\pcps$, $\pcepi$ and $\pctop$ as well.

\section{Loop functors in \texorpdfstring{$\pstoppt$}{PsTop*} and \texorpdfstring{$\epitoppt$}{EpiTop*}}
We briefly recall that in the category $\toppt$ of pointed topological spaces and based continuous maps one can define the loop space endofunctor $\looptop\colon\toppt\to\toppt$ which assigns:
\begin{itemize}
\item to each pointed space\footnote{When there is no danger of confusion, for ease of notation we avoid to write the basepoint and thus indicate $(X,x_0)$ simply by $X$.} $(X,x_0)\in\toppt$ the pointed space of based loops in $X$ equipped with the subspace topology inherited from the compact-open topology on the set $\top(S^1,X)$ of all free loops (the basepoint of $\looptop (X,x_0)$ is the constant loop based at $x_0$);
\item to each based continuous map $f\colon (X,x_0)\to (Y,y_0)$ between pointed spaces the based continuous map $\looptop f\colon\looptop (X,x_0)\to\looptop (Y,y_0)$ defined by $(\looptop f)(l)\coloneqq f\circ l$.\end{itemize}

Concatenation and inversion of loops give $\looptop (X,x_0)$ a natural H-group structure, that is, each loop space is a group up to homotopy. More precisely, we adopt the following definition of an H-group.

\begin{defn}[H-group]\label{defn:Hgroup}
An H-group structure on a pointed topological space $(X,x_0)$ consists of pointed continuous maps $\wedge\colon X\times X\to X$ and $\sigma\colon X\to X$ such that:
\begin{enumerate}
\item the maps $x\mapsto x\wedge x_0$ and $x\mapsto x_0\wedge x$ are pointed homotopic to the identity map $\id\colon X\to X$; 
\item the maps $x\mapsto x\wedge \sigma(x)$ and $x\mapsto \sigma(x)\wedge x$ are pointed homotopic to the constant map $x\mapsto x_0$;
\item the map $(x,x',x'')\mapsto (x\wedge x')\wedge x''$ is pointed homotopic to the map $(x,x',x'')\mapsto x\wedge (x'\wedge x'')$.
\end{enumerate}
\end{defn}

The construction of the loop space functor and the definition of an H-group make sense in any cartesian closed topological construct containing $\top$. In particular, we obtain functors
\begin{equation*}
\begin{aligned}
\loopepi&\colon\epitoppt\to\epitoppt\\
\loopps&\colon\pstoppt\to\pstoppt
\end{aligned}
\end{equation*}
and we can ask whether each loop epi/pseudospace is an H-group. The answer is affirmative, as the next proposition shows. We first record two easy results we shall need.

\begin{lemma}\label{lemma:eval}
For each $X\in\pstoppt$ the evaluation map ${\loopps X}\times [0,1]\to X$ is continuous. Analogously for $\epitoppt$ and $\toppt$.
\end{lemma}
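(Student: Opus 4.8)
The plan is to obtain the statement as a direct consequence of cartesian closedness. Recall from Prop.~\ref{prop:expo} that for every $X\in\pstop$ the exponential pseudospace $X^{S^1}$ exists (here $S^1$ is viewed as a pseudospace via $\top\inclu\pstop$), and that, by the very definition of the loop functor, $\loopps X$ is the sub-pseudospace of $X^{S^1}$ consisting of the based loops, i.e.\ of those $l$ with $l(\ast)=x_0$. Since $\pstop$ is cartesian closed, $S^1$ is exponentiable, so the defining property of an exponentiable object (clause~(2), with exponent $S^1$) gives that the evaluation map $\ev\colon X^{S^1}\times S^1\to X$ is a $\pstop$-morphism.

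Next I would simply exhibit the map in the statement as a composite of continuous maps. Writing $\iota\colon\loopps X\hookrightarrow X^{S^1}$ for the inclusion of the sub-pseudospace and $q\colon[0,1]\to S^1$, $t\mapsto e^{2\pi i t}$, for the standard parametrization (continuous, being continuous in $\top$ and $\top\inclu\pstop$), the evaluation map of the lemma is exactly
\[
\loopps X\times[0,1]\xrightarrow{\ \iota\times q\ }X^{S^1}\times S^1\xrightarrow{\ \ev\ }X,\qquad (l,t)\longmapsto l\bigl(q(t)\bigr).
\]
The inclusion $\iota$ is continuous by definition of the subspace pseudotopology, hence so is the product $\iota\times q$ (the product of two continuous maps is continuous), and composing with the continuous $\ev$ yields the desired continuity.

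For $\epitoppt$ the argument is verbatim the same: $\epitop$ is cartesian closed (Prop.~\ref{prop:expo}), $X^{S^1}\in\epitop$ whenever $X\in\epitop$ by Prop.~\ref{prop:expo}(\ref{expo:itm:first}), and $\loopepi X$ is a sub-epispace of $X^{S^1}$, so the same composite works. For $\toppt$ it is the classical joint continuity of evaluation: since $S^1$ is locally compact Hausdorff, $X^{S^1}$ is the space of free loops with the compact-open topology (Prop.~\ref{prop:expo}(\ref{expo:itm:second})) and $\ev\colon X^{S^1}\times S^1\to X$ is continuous, hence so is $\ev\circ(\iota\times q)$.

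I do not expect any real obstacle here: the whole content is carried by Prop.~\ref{prop:expo}. The only point deserving a moment's attention is the bookkeeping that the map in the statement is literally the restriction of the jointly continuous evaluation on the full exponential, composed with the parametrization $[0,1]\to S^1$, rather than some a priori different map, and that subspace and product pseudotopologies interact as expected — which is automatic in a topological construct.
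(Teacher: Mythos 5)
Your proof is correct and is essentially the paper's own argument: the map in the lemma is exhibited as the restriction of the canonical evaluation of an exponential object (continuous by cartesian closedness of $\pstop$, resp.\ $\epitop$, resp.\ exponentiability of the exponent in $\top$), precomposed with continuous maps into the exponential and the exponent. The only, cosmetic, difference is that the paper factors through $X^{[0,1]}$ directly, writing the map as $\loopps X\times[0,1]\hookrightarrow X^{[0,1]}\times[0,1]\xrightarrow{\ev}X$, which avoids the parametrization $q\colon[0,1]\to S^1$ and the implicit identification of based loops $S^1\to X$ with paths $[0,1]\to X$ sending both endpoints to the basepoint.
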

\begin{proof}
Write it as ${\loopps X}\times[0,1]\hookrightarrow X^{[0,1]}\times[0,1]\xrightarrow{\ev}X$.
\end{proof}

\begin{lemma}\label{lemma:dual}
For each $X,Y\in\pstoppt$, all set-theoretic maps $h\colon X\times[0,1]\to Y$ such that, for each $x\in X$, $h(x,0)=h(x,1)=y_0$ correspond bijectively to all set-theoretic maps $\hat{h}\colon X\to \loopps(Y,y_0)$. Moreover, $h$ is continuous iff $\hat{h}$ is. Analogously for $\epitoppt$ and $\toppt$.
\end{lemma}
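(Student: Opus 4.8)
This is the pointed form of the exponential adjunction, and the plan is to deduce it from Prop.~\ref{prop:expo}, which guarantees that $[0,1]$ is exponentiable in each of $\pstop$, $\epitop$ and $\top$, together with the universal property of subspace structures. On the level of sets: given a set map $h\colon X\times[0,1]\to Y$ with $h(x,0)=h(x,1)=y_0$ for every $x\in X$, put $\hat h(x)\coloneqq h(x,-)\colon[0,1]\to Y$; the endpoint hypothesis says precisely that each $\hat h(x)$ is a loop based at $y_0$, i.e.\ an element of the underlying set of $\loopps(Y,y_0)$ (identifying based loops $S^1\to Y$ with paths $[0,1]\to Y$ having both endpoints at $y_0$, as is done implicitly in Lemma~\ref{lemma:eval}), so $\hat h\colon X\to\loopps(Y,y_0)$ is a well-defined set map. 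Conversely a set map $g\colon X\to\loopps(Y,y_0)$ yields $h(x,t)\coloneqq g(x)(t)$, which satisfies the endpoint condition, and the two assignments are visibly mutually inverse. This part uses only that $|\loopps(Y,y_0)|$ is the set of based loops.

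For continuity, recall that $\loopps(Y,y_0)$ carries the subspace pseudotopology inherited from the exponential $Y^{[0,1]}$, as already used in Lemma~\ref{lemma:eval}; by Prop.~\ref{prop:expo} this exponential exists, and it---and hence the subspace of it---is again an object of the relevant construct (Cor.~\ref{cor:preservation}). Write $\check h\colon X\to Y^{[0,1]}$, $\check h(x)=h(x,-)$, for the exponential transpose of $h$; as set maps $\check h=\iota\circ\hat h$, where $\iota\colon\loopps(Y,y_0)\hookrightarrow Y^{[0,1]}$ is the (continuous) subspace inclusion. The exponential adjunction (the Remark following the definition of exponentiable object), unwound in both directions---condition (3) of that definition gives that $h$ continuous implies $\check h$ continuous, while $h=\ev\circ(\check h\times\id)$ together with condition (2) gives the converse---shows that $h$ is continuous iff $\check h$ is; and the universal property of the subspace pseudotopology shows that $\check h=\iota\circ\hat h$ is continuous iff $\hat h\colon X\to\loopps(Y,y_0)$ is. Chaining the two equivalences gives the claim. (The ``only if'' direction can alternatively be read off directly from the factorization $X\times[0,1]\xrightarrow{\hat h\times\id}\loopps(Y,y_0)\times[0,1]\xrightarrow{\ev}Y$ and Lemma~\ref{lemma:eval}.)

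The identical argument applies in $\epitop$ (cartesian closed, hence $[0,1]$ exponentiable with the exponential an epispace, by Prop.~\ref{prop:expo}\eqref{expo:itm:first}) and in $\top$ ($[0,1]$ is locally compact Hausdorff, hence exponentiable with $Y^{[0,1]}$ the compact--open function space, by Prop.~\ref{prop:expo}\eqref{expo:itm:second}); in each case the subspace is formed inside an object of the same construct, the inclusions preserving initial sources by Cor.~\ref{cor:preservation}. I do not expect a genuine obstacle here; the only point requiring a little care is the bookkeeping that identifies $\loopps(Y,y_0)$ with a subspace of $Y^{[0,1]}$ (rather than of $Y^{S^1}$) and confirms that $Y^{[0,1]}$, and thus that subspace, really lives in the relevant category, so that the subspace universal property is available.
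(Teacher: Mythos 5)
Your proposal is correct and follows essentially the same route as the paper's proof: transpose through the exponential $Y^{[0,1]}$ (which exists by Prop.~\ref{prop:expo}) and then restrict the codomain to $\loopps(Y,y_0)$ using the initiality of the subspace pseudotopology with respect to the inclusion. The paper's version is just a terser statement of exactly these two ingredients, so no changes are needed.
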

\begin{proof}
Just observe that for a continuous map $X\to Y^{[0,1]}$ with image in $\loopps(Y)$ the restriction of the codomain to $\loopps(Y)$ gives a continuous map $X\to\loopps(Y)$ (we use the fact that the subspace pseudotopology is initial with respect to the inclusion map).
\end{proof}

\begin{prop}\label{prop:Hgroup}
For each $(X,x_0)\in\pstoppt$ and each $(X',x'_0)\in\epitoppt$, concatenation and inversion of loops give $\loopps(X,x_0)$ and $\loopepi(X',x'_0)$ the structure of H-groups.
\end{prop}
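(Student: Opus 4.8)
The plan is to carry over the classical reparametrization formulas verbatim: the \emph{only} thing that genuinely needs to be re-checked in the new setting is continuity, and Lemmas \ref{lemma:eval} and \ref{lemma:dual} together with the pasting lemma (Lemma \ref{lemma:pasting_pstop}) are precisely what is needed for that. I would treat $\loopps(X,x_0)$ in detail; the case of $\loopepi(X',x_0')$ is word-for-word the same, using Remark \ref{rmk:pasting_epi} for the pasting lemma and Proposition \ref{prop:expo}(\ref{expo:itm:first}) (together with initial closedness of $\epitop$ in $\pstop$) to know that $\loopepi(X')=\loopps(X')$ is already an epispace, so nothing has to be reflected.

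First I would isolate the continuity principle used throughout. For any pseudospace $Z$, any integer $k\ge 1$, and any subset $C\subset[0,1]^k$ that is closed in the usual topology, the set $Z\times C$ is closed in $R(Z\times[0,1]^k)$: the projection $Z\times[0,1]^k\to[0,1]^k$ is continuous in $\pstop$, hence $R(Z\times[0,1]^k)\to R([0,1]^k)=[0,1]^k$ is continuous in $\top$ by Proposition \ref{prop:reflectors}, and $Z\times C$ is the preimage of $C$. A finite family of such slabs is trivially locally finite, so if a set map $Z\times[0,1]^k\to X$ is such that, on each slab of a finite closed cover cut out by continuous inequalities in the $[0,1]$-coordinates, it factors as a projection followed by a continuous reparametrization of the loop parameter followed by $\ev\colon\loopps X\times[0,1]\to X$ (Lemma \ref{lemma:eval}), then it is continuous by Lemma \ref{lemma:pasting_pstop}(\ref{pasting:itm:second}). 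Note all the reparametrizing functions below (piecewise affine, or built from $\min$, or ratios with strictly positive denominator on the relevant region) are continuous maps of cubes in $\top$, hence in $\pstop$. With this in hand, define $\wedge$ from the set map $c\colon\loopps X\times\loopps X\times[0,1]\to X$, $c(l,l',t)=l(2t)$ for $t\le\tfrac12$ and $c(l,l',t)=l'(2t-1)$ for $t\ge\tfrac12$: it is continuous by the principle (two slabs $\{t\le\tfrac12\}$, $\{t\ge\tfrac12\}$), satisfies $c(l,l',0)=c(l,l',1)=x_0$, so Lemma \ref{lemma:dual} yields a continuous $\wedge\colon\loopps X\times\loopps X\to\loopps X$, pointed because the constant loop concatenated with itself is constant. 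Likewise $\sigma\colon\loopps X\to\loopps X$ comes from $s(l,t)=\ev(l,1-t)$, which is continuous and has $s(l,0)=s(l,1)=x_0$, so Lemma \ref{lemma:dual} gives a pointed continuous $\sigma$ with $\sigma(l)(t)=l(1-t)$.

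For the H-group axioms I would exhibit the three standard homotopies and check continuity by the same recipe. For the unit axiom, the homotopy from $l\mapsto l\wedge c_{x_0}$ to $\id$ is dual to $H\colon\loopps X\times[0,1]_s\times[0,1]_t\to X$ with $H(l,s,t)=\ev\!\left(l,\tfrac{2t}{1+s}\right)$ on $\{2t\le 1+s\}$ and $H(l,s,t)=x_0$ on $\{2t\ge 1+s\}$; the endpoint and basepoint conditions are immediate, and continuity follows since $\{2t\le 1+s\}$ and $\{2t\ge 1+s\}$ are closed in $R(\loopps X\times[0,1]^2)$ and $(s,t)\mapsto 2t/(1+s)$ is continuous on the first (the denominator being $\ge 1$ there). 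The symmetric statement for $l\mapsto c_{x_0}\wedge l$ is handled the same way. For the inverse axiom, use the homotopy dual to $H(l,s,t)=\ev(l,\min(2t,s))$ for $t\le\tfrac12$ and $\ev(l,\min(2-2t,s))$ for $t\ge\tfrac12$, noting $\min\colon[0,1]^2\to[0,1]$ is continuous; at $s=1$ this is $l\wedge\sigma(l)$ and at $s=0$ it is $c_{x_0}$ (reverse $s\mapsto 1-s$ if one insists on the direction in Definition \ref{defn:Hgroup}(2)). For associativity, use the classical homotopy on $(\loopps X)^3\times[0,1]$ that moves the two breakpoints affinely from $\{\tfrac14,\tfrac12\}$ to $\{\tfrac12,\tfrac34\}$, i.e.\ dual to the map defined on the three slabs $\{4t\le s+1\}$, $\{s+1\le 4t\le s+2\}$, $\{4t\ge s+2\}$ of $(\loopps X)^3\times[0,1]^2$ by $l_1(4t/(s+1))$, $l_2(4t-s-1)$, $l_3((4t-s-2)/(2-s))$ respectively; the slabs are closed in the relevant reflection, the reparametrizations are continuous real functions on their (compact) domains with values in $[0,1]$, the formulas agree on overlaps (each side is $x_0$ there), and $s=0$, $s=1$ recover $(l_1\wedge l_2)\wedge l_3$ and $l_1\wedge(l_2\wedge l_3)$. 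In every case Lemma \ref{lemma:dual} converts the continuous map into a continuous map into $\loopps X$, and all homotopies are pointed because reparametrizing the constant loop yields the constant loop.

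The main obstacle is bookkeeping rather than mathematical depth: one must make sure each piecewise formula genuinely satisfies the hypotheses of Lemma \ref{lemma:pasting_pstop}, i.e.\ correctly identify the closed, locally finite cover inside the appropriate reflected topological space $R(Z\times[0,1]^k)$, and check that the values on overlaps agree so the formulas define honest functions. There is no subtlety in the reparametrizing maps themselves — they are the familiar continuous maps of cubes, continuous in $\top$ and hence in $\pstop$ — and every use of the exponential correspondence is licensed by cartesian closedness of $\pstop$ (resp.\ $\epitop$) through Lemma \ref{lemma:dual}.
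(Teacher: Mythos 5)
Your proposal is correct and follows essentially the same route as the paper: prove continuity of concatenation and inversion via Lemmas \ref{lemma:eval} and \ref{lemma:dual} together with Lemma \ref{lemma:pasting_pstop} applied to slabs that are closed in the reflected space (your closedness argument via the reflected projection is the same device as the paper's use of the continuous identity $R(T\times[0,1])\to R(T)\times[0,1]$), then exhibit explicit reparametrization homotopies for the three axioms and transpose them with Lemma \ref{lemma:dual}. The only, harmless, deviation is in the axioms: the paper writes the unit and inverse homotopies as a single continuous reparametrization $\phi,\psi\colon[0,1]^2\to[0,1]$ composed with evaluation (so no pasting is needed there) and obtains homotopy associativity by reparametrizing the already-concatenated loop, using the previously established continuity of concatenation, whereas you re-run the pasting argument slab-by-slab for each homotopy, which works equally well.
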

\begin{proof}
The proof is a simple adaptation of the usual proof \cite[Chap.\@ IV]{Ser1951} in $\top$, with some care needed when proving continuity of loop concatenation, but for clarity we write it in its entirety. We use symbols $l\cdot l'$ and $l^{-1}$ for loop concatenation and inversion, respectively, and for ease of notation we omit writing the basepoint. We consider $\pstop$ only, the case $\epitop$ being perfectly analogous. To prove the continuity of loop concatenation let us consider $\Phi\colon({\loopps X})^2\times[0,1]\to X$, $\Phi(l,l',t)\coloneqq(l\cdot l')(t)$. Put momentarily $T\coloneqq ({\loopps X})^2$ to save typographic space. The map $\Phi$ restricted to sets $T\times[0,\frac{1}{2}]$ and $T\times[\frac{1}{2},1]$ becomes essentially (up to a continuous rescaling and an uninfluential ${\loopps X}$ factor) the evaluation map ${\loopps X}\times[0,1]\to X$ which is continuous by Lemma~\ref{lemma:eval}. Moreover, $T\times[0,\frac{1}{2}]$ and $T\times[\frac{1}{2},1]$ are closed in $R(T)\times[0,1]$, and the identity map $R(T\times[0,1])\to R(T)\times[0,1]$ is continuous. Therefore $T\times[0,\frac{1}{2}]$ and $T\times[\frac{1}{2},1]$ are closed in $R(T\times[0,1])$ as well and by the pasting lemma in $\pstop$, Lemma \ref{lemma:pasting_pstop}, we deduce that $\Phi$ is continuous. Now observe that $\Phi(l,l',0)=l(0)=x_0=l'(1)=\Phi(l,l',1)$ so we can apply Lemma~\ref{lemma:dual} and conclude that loop concatenation $\hat{\Phi}\colon({\loopps X})^2\to {\loopps X}$ is continuous. The proof for loop inversion is similar (but it does not use the pasting lemma). We now prove the remaining properties in the definition of an H-group. Let us call $e$ the constant loop at $x_0$.
\begin{enumerate}
\item to prove that the map $l\mapsto l\cdot e$ is pointed homotopic to the identity map, consider the continuous map $\phi\colon[0,1]^2\to[0,1]$, $\phi(s,t)\coloneqq(1-s)\min(2t,1)+st$, then take $\id\times\phi\colon{\loopps X}\times[0,1]^2\to {\loopps X}\times[0,1]$ and compose it with the evaluation obtaining the continuous map $\sigma\colon{\loopps X}\times[0,1]^2\to X$, $\sigma(l,s,t)\coloneqq l(\phi(s,t))$. Observe that $\sigma(l,s,0)=l(0)=x_0=l(1)=\sigma(l,s,1)$ so that by Lemma~\ref{lemma:dual} we get a continuous map $\hat{\sigma}\colon{\loopps X}\times[0,1]\to{\loopps X}$. Finally observe that $\hat{\sigma}(l,0)=l\cdot e$, $\hat{\sigma}(l,1)=l$ and $\hat{\sigma}(e,s)=e$. Substituting $\min$ with $\max$ proves the analogous statement for the map $l\mapsto e\cdot l$;
\item to prove that the map $l\mapsto l\cdot l^{-1}$ is pointed homotopic to the constant map $l\mapsto e$, proceed as above with the continuous map $\psi\colon[0,1]^2\to[0,1]$, $\psi(s,t)\coloneqq 2(1-s)\min(t,1-t)$; substituting $\min$ with $\max$ proves the analogous statement for the map $l\mapsto l^{-1}\cdot l$;
\item to prove that the map $(l,l',l'')\mapsto (l\cdot l')\cdot l''$ is pointed homotopic to the map $(l,l',l'')\mapsto l\cdot (l'\cdot l'')$, proceed as above with the continuous map $\chi\colon[0,1]^2\to[0,1]$ given by
\begin{equation*}
\chi(s,t)=\begin{cases}
\frac{t}{1+s} & 0\leq t\leq\frac{1+s}{4}\\
t-\frac{s}{4} & \frac{1+s}{4}\leq t\leq\frac{2+s}{4}\\
\frac{1}{2}+\frac{4t-2-s}{4-2s} & \frac{2+s}{4}\leq t\leq 1
\end{cases}
\end{equation*}
(notice that this time we shall need the previously established continuity of $\cdot$ to let the argument go through).
\end{enumerate}
\end{proof}

\begin{rmk}
The above proof shows that, given a cartesian closed topological construct $\C$ containing $\top$ as a subconstruct, for each $X\in\Cpt$ the loop object ${\loopC X}$ satisfies all the properties of an H-group except possibly continuity of loop concatenation and homotopy associativity.
\end{rmk}

In the next proposition we record a useful property of $\loopepi$ and $\loopps$ that can be proven in exactly the same way as for $\looptop$.
\begin{prop}\label{prop:loop_products}
Both $\loopepi$ and $\loopps$ preserve arbitrary products. 
\end{prop}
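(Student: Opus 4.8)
The plan is to exhibit both loop functors as composites of product-preserving operations inside a cartesian closed topological construct and then to invoke the general fact that limits commute with limits. The single categorical input needed is the following: in a cartesian closed topological construct $\C$ and for any exponentiable object $A$, the bijection $\C(Z\times A,Y)\xrightarrow{\sim}\C(Z,Y^A)$, $h\mapsto\hat h$, from the Remark after the definition of exponentiability is natural in both $Z$ and $Y$; hence $(-)^A$ is right adjoint to $(-)\times A$ and therefore preserves all limits. In particular, for every family $(Y_j)_{j\in J}$ of $\C$-objects the canonical comparison morphism $\bigl(\Prod_{j}Y_j\bigr)^{A}\to\Prod_{j}\bigl(Y_j^{A}\bigr)$ is an isomorphism. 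I would apply this with $A=[0,1]$ (which is exponentiable) in $\C=\pstop$ and in $\C=\epitop$, both of which are cartesian closed by Prop.~\ref{prop:expo}.

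Next I would rewrite $\loopps(X,x_0)$ as a finite limit built from $X^{[0,1]}$. By definition it is the sub-pseudospace of $X^{[0,1]}$ on the loops $\gamma$ with $\gamma(0)=\gamma(1)=x_0$; since in a topological construct a subspace carries the initial lift and, along a single inclusion, the initial lift agrees with the pullback lift, $\loopps(X,x_0)$ is precisely the pullback in $\pstop$ of the diagram
\[
X^{[0,1]}\xrightarrow{(\ev_0,\ev_1)}X\times X\xleftarrow{\ (x_0,x_0)\ }\{*\},
\]
its basepoint being the constant loop at $x_0$. The same description holds for $\loopepi$ in $\epitop$.

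Finally I would combine the two observations: given a family $(X_j,x_{0,j})_{j\in J}$ with product $\bigl(\Prod_jX_j,(x_{0,j})_j\bigr)$, the pullback diagram computing $\loopps\bigl(\Prod_jX_j\bigr)$ becomes, after applying the interchange isomorphism of the first step together with $\bigl(\Prod_jX_j\bigr)\times\bigl(\Prod_jX_j\bigr)\cong\Prod_j(X_j\times X_j)$, the $J$-indexed product of the pullback diagrams computing the $\loopps(X_j)$. Since a product of pullbacks is the pullback of the product diagram, this yields an isomorphism $\loopps\bigl(\Prod_jX_j\bigr)\cong\Prod_j\loopps(X_j)$ which is the evident natural map, hence natural in the $X_j$; so $\loopps$ preserves arbitrary products, and the argument for $\loopepi$ is word-for-word the same.

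The only step that is not pure formalism — and the one I expect to take the most care, though it is routine — is checking that the interchange isomorphism $\bigl(\Prod_jX_j\bigr)^{[0,1]}\cong\Prod_jX_j^{[0,1]}$ intertwines $(\ev_0,\ev_1)$ with $\Prod_j(\ev_0,\ev_1)$ and carries the basepoint $\{*\}\to\Prod_jX_j$ to $\Prod_j(\{*\}\to X_j)$; this follows from the universal property of the exponential by post-composing with the product projections and with the evaluation map.
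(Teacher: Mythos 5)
Your proposal is correct and amounts to the argument the paper intends: the paper gives no written proof, saying only that one argues exactly as for $\looptop$, and that classical argument consists precisely of your two ingredients --- cartesian closedness of $\pstop$ and $\epitop$ makes $(-)^{[0,1]}$ a right adjoint to $(-)\times[0,1]$ and hence product-preserving, and the based-loop condition is an initial (limit) condition compatible with products. Your packaging of the based-loop space as a pullback of $(\ev_0,\ev_1)$ along the basepoint is just a cleaner categorical phrasing of that same route, not a genuinely different one.
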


Let $X$ be a pseudospace and $l\colon[0,1]\to X$ be a loop in $X$. Then $R_1 l$ is a loop in the reflected epispace $R_1 X$ and $R l$ is a loop in the reflected space $R X$. Given that the reflectors do not modify the underlying set-theoretic maps we can identify ${\loopps X}$ with a subset of ${\loopepi X}$ and analogously ${\loopepi X}\hookrightarrow{\looptop X}$. These inclusions are continuous by Prop.\@ \ref{prop:expo}. The commutative diagram in Fig.\@ \ref{fig:wardrobe2} illustrates the mutual relations among $\loopps$, $\loopepi$ and $\looptop$.

\begin{center}
\begin{figure}
\begin{tikzcd}[row sep={2em,between origins}, column sep={2em,between origins}, text height=1.5ex, text depth=0.25ex, nodes in empty cells]
X \arrow[rrrrrr,"f"] \arrow[dddrr,"\id",sloped] &&&&&& Y \arrow[dddrr,"\id",sloped] \\
\\
\\
&& R_1 X \arrow[rrrrrr,crossing over,"f" near start] &&&&&& R_1 Y \arrow[dddrr,"\id",sloped] \\
\loopps X \arrow[rrrrrr, "\loopps f" near end,densely dotted,swap] \arrow[dddrr,"\id",sloped] \arrow[dddddddrr,bend right=12,swap,hook]&&&&&& \loopps Y \arrow[dddddddrr,bend right=12,swap,densely dotted,hook]\arrow[dddrr, "\id" near start,densely dotted,sloped]\\
\\
&&&& R X \arrow[uuull,<-,crossing over,"\id",sloped,near start] \arrow[rrrrrr,crossing over,"f" very near start] &&&&&& R Y \\
&& R_1\loopps X \arrow[rrrrrr,crossing over,"\loopps f",swap] \arrow[dddrr,"\id",sloped]  \arrow[dddd,hook] &&&&&& R_1 \loopps Y \arrow[dddrr,"\id",sloped] \arrow[dddd,hook,densely dotted] \\
\\
\\
&&&& R \loopps X \arrow[rrrrrr,crossing over,"\loopps f" near start] &&&&&& R \loopps Y \arrow[dddd,hook] \\
&& \loopepi R_1 X \arrow[rrrrrr,"\loopepi f",densely dotted,swap] \arrow[dddrr,"\id",sloped] \arrow[dddddddrr,bend right=12,swap,hook]&&&&&& \loopepi R_1 Y \arrow[dddrr,"\id",densely dotted,sloped] \arrow[dddddddrr,bend right=12,swap,densely dotted,hook]\\
\\
\\
&&&& R_2 \loopepi R_1 X \arrow[rrrrrr,crossing over,"\loopepi f"] \arrow[uuuu,hookleftarrow,crossing over,swap] \arrow[dddd,hook] &&&&&& R_2 \loopepi R_1 Y \arrow[dddd,hook]\\
\\
\\
\\
&&&& \looptop R X \arrow[rrrrrr,"\looptop f"]&&&&&& \looptop R Y
\end{tikzcd}
\caption{Mutual relations among $\loopps$, $\loopepi$ and $\looptop$.}\label{fig:wardrobe2}
\end{figure}
\end{center}

\begin{problem}
Understanding for which $(X,x_0)\in\pstoppt$ the inclusion
$$R_1 \loopps(X,x_0)\hookrightarrow \loopepi(R_1 X,x_0)$$
is initial in $\epitop$, that is, for which pointed pseudospaces $(X,x_0)$ the epispace $R_1 \loopps(X,x_0)$ is an epitopological subspace of $\loopepi(R_1 X,x_0)$. Analogous question for the inclusion $R_2 \loopepi(X',x'_0)\hookrightarrow \looptop(R_2 X',x'_0)$ where $(X',x'_0)\in\epitoppt$.
\end{problem}

By the above discussion and by Prop.\@ \ref{prop:expo} it is clear that $\loopps$ (resp., $\loopepi$) is an extension of $\looptop$ to $\pstoppt$ (resp., to $\epitoppt$). We record this fact in the next proposition.
\begin{prop}\label{prop:extension}
If $(X,x_0)\in\epitoppt$ then $\loopps(X,x_0)=\loopepi(X,x_0)$. If $(X',x'_0)\in\toppt$ then $\loopepi(X',x'_0)=\looptop(X',x'_0)$.
\end{prop}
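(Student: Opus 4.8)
The plan is to unwind the three loop objects as initial sub-structures inside exponentials and reduce everything to Prop.~\ref{prop:expo}. Recall that $\loopps(X,x_0)$, $\loopepi(X,x_0)$ and $\looptop(X,x_0)$ share the same underlying pointed set --- the based loops $l\colon S^1\to X$, with the constant loop at $x_0$ as basepoint --- and that each carries the initial structure with respect to the inclusion of this set into the exponential $X^{S^1}$, formed in $\pstop$, $\epitop$ and $\top$ respectively (the exponents $S^1$, and also $[0,1]$, being exponentiable in all of them). Thus the proposition reduces to two things: (i) that the relevant exponential $X^{S^1}$ is the same object in all the categories involved, under each hypothesis; and (ii) that passing to the sub-structure of based loops is performed compatibly along $\top\inclu\epitop\inclu\pstop$.

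For (ii) the key facts are that $\epitop$ is initially closed in $\pstop$ (this is invoked in the sketch of Prop.~\ref{prop:reflectors}) and that a sub-pseudospace of a topological space is again topological. The latter is an elementary ultrafilter computation: for an ultrafilter $\mathscr{U}$ on a subset $S$ of a topological space $T$ and $x\in S$, one has $\mathscr{U}\to_S x$ in the subspace pseudotopology iff $i_*\mathscr{U}\to x$ in $T$ (where $i\colon S\hookrightarrow T$ is the inclusion), and this is exactly ultrafilter convergence for the ordinary subspace topology; hence the two structures coincide. Combining this with initial closedness of $\epitop$ in $\pstop$ shows in addition that a sub-epispace of a topological space is topological. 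Granting (ii), both assertions follow: if $X\in\epitop$ then $X^{S^1}\in\epitop$ by Prop.~\ref{prop:expo}\eqref{expo:itm:first}, so the $\pstop$-exponential and the $\epitop$-exponential coincide, and taking the based-loop sub-structure via (ii) gives $\loopps(X,x_0)=\loopepi(X,x_0)$. If moreover $X\in\top$, then $S^1$ being (locally) compact Hausdorff, Prop.~\ref{prop:expo}\eqref{expo:itm:second} gives $X^{S^1}\in\top$ with the compact--open topology; its based-loop sub-structure is therefore topological by (ii) and, being the initial lift of the inclusion, is precisely the classical $\looptop(X,x_0)$. Since $\top\subset\epitop$, the first part gives $\loopps(X',x'_0)=\loopepi(X',x'_0)$, and comparing with the previous line yields $\loopepi(X',x'_0)=\looptop(X',x'_0)$.

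The only real work is item (ii), i.e.\ checking that ``subspace'' is computed consistently across the three constructs; once the ultrafilter computation for sub-pseudospaces of topological spaces is in place and initial closedness of $\epitop$ in $\pstop$ is invoked, the remainder is a direct quotation of Prop.~\ref{prop:expo}, matching the informal remark preceding the statement that the continuous inclusions ${\loopps X}\inclu{\loopepi X}\inclu{\looptop X}$ degenerate to identities under these hypotheses. A last routine point is that the basepoint --- the constant loop at $x_0$ --- is literally the same element throughout, which holds because the reflectors and the inclusions never alter underlying set maps.
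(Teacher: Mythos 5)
Your argument is correct and is essentially the paper's own (implicit) proof: the paper records this proposition as an immediate consequence of Prop.~\ref{prop:expo} and the preceding discussion, and your write-up simply makes that explicit by identifying the exponentials via Prop.~\ref{prop:expo} and checking that the based-loop substructures are computed consistently. Your item (ii) could also be obtained in one line from Cor.~\ref{cor:preservation} (the inclusions $\top\inclu\epitop\inclu\pstop$ preserve initial sources), but your ultrafilter computation plus initial closedness of $\epitop$ in $\pstop$ is an equally valid way to see it.
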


\section{Epitopological and pseudotopological fundamental groups}
We now put together the previous constructions to obtain epi- and pseudo-topologizations of the fundamental group of a pointed epi- or pseudospace (in particular, of a pointed topological space). By Prop.\@ \ref{prop:Hgroup} the set of path components of a loop epi/pseudospace is a group (with multiplication and inversion induced by loop concatenation and loop inversion). On the other hand, the epi/pseudo-topologized path component functors $\pcepi$ and $\pcps$ endow this group with an epi/pseudotopology and by functoriality the induced inversion map is continuous. The continuity of the multiplication follows from Prop.\@ \ref{prop:cont_mult}.

\begin{defn}
A pseudotopological group is a group equipped with a pseudotopology making the inverse map and the multiplication map continuous. The category of all pseudotopological groups and continuous group homomorphisms between them is denoted by $\pstopgrp$. The category $\epitopgrp$ is defined analogously.
\end{defn}

\begin{rmk}
The obvious embeddings $\topgrp\hookrightarrow\epitopgrp\hookrightarrow\pstopgrp$ are full, and each category in this chain of inclusions is a topological concrete category over $\grp$. Moreover, $\pstopgrp$ (resp., $\epitopgrp$, $\topgrp$) is the category of group-objects of $\pstop$ (resp., $\epitop$, $\top$). On the other hand, the category $\qtopgrp$ of quasitopological groups is contained neither in $\epitopgrp$ nor in $\pstopgrp$. Rather, if we consider all these categories as subcategories of the category of groups with pseudotopology (no requirements on the continuity of operations) and continuous homomorphisms, we have:
$$\qtopgrp\cap\pstopgrp=\qtopgrp\cap\epitopgrp=\topgrp\;.$$
\end{rmk}

The above discussion together with Prop.\@ \ref{prop:Hgroup} implies that, for each $X\in\pstoppt$ and each $X'\in\epitoppt$, $\pcps\loopps X$ is a pseudotopological group and $\pcepi\loopepi X$ is an epitopological group. We record this fact in the next definition.

\begin{defn}
The epitopological and pseudotopological fundamental group functors are the composed functors
\begin{equation*}
\begin{aligned}
\piepi=\pcepi\loopepi&\colon\epitoppt\to\epitopgrp\\
\pips=\pcps\loopps&\colon\pstoppt\to\pstopgrp\;.\\
\end{aligned}
\end{equation*}
\end{defn}

The mutual relations among these functors are illustrated by the diagram in Fig.~\ref{fig:wardrobe3}, constructed by taking into account the results of the previous sections.

\begin{center}
\begin{figure}
\begin{tikzcd}[row sep={2em,between origins}, column sep={2em,between origins}, text height=1.5ex, text depth=0.25ex, nodes in empty cells]
X \arrow[rrrrrr,"f"] \arrow[dddrr,"\id",sloped] &&&&&& Y \arrow[dddrr,"\id",sloped] \\
\\
\\
&& R_1 X \arrow[rrrrrr,crossing over,"f" near start] &&&&&& R_1 Y \arrow[dddrr,"\id",sloped] \\
\pips X \arrow[rrrrrr, "\pips f" near end,densely dotted,swap] \arrow[dddrr,"\id",sloped] \arrow[dddddddrr,bend right=12,swap]&&&&&& \pips Y \arrow[dddddddrr,bend right=12,swap,densely dotted]\arrow[dddrr, "\id" near start,densely dotted,sloped]\\
\\
&&&& R X \arrow[uuull,<-,crossing over,"\id",near start,sloped] \arrow[rrrrrr,crossing over,"f" very near start] &&&&&& R Y \\
&& R_1\pips X \arrow[rrrrrr,crossing over,"\pips f",swap] \arrow[dddrr,"\id",sloped]  \arrow[dddd] &&&&&& R_1 \pips Y \arrow[dddrr,"\id",sloped] \arrow[dddd,densely dotted] \\
\\
\\
&&&& R \pips X \arrow[rrrrrr,crossing over,"\pips f" near start] &&&&&& R \pips Y \arrow[dddd] \\
&& \piepi R_1 X \arrow[rrrrrr,"\piepi f",densely dotted,swap] \arrow[dddrr,"\id",sloped] \arrow[dddddddrr,bend right=12,swap]&&&&&& \piepi R_1 Y \arrow[dddrr,"\id",densely dotted,sloped] \arrow[dddddddrr,bend right=12,swap,densely dotted]\\
\\
\\
&&&& R_2 \piepi R_1 X \arrow[rrrrrr,crossing over,"\piepi f"] \arrow[uuuu,leftarrow,crossing over,swap] \arrow[dddd] &&&&&& R_2 \piepi R_1 Y \arrow[dddd]\\
\\
\\
\\
&&&& \biss R X \arrow[rrrrrr,"\biss f"]&&&&&& \biss R Y
\end{tikzcd}
\caption{Mutual relations among $\pips$, $\piepi$ and $\biss$.}\label{fig:wardrobe3}
\end{figure}
\end{center}

By combining Prop.\@ \ref{prop:pc_products} and Prop.\@ \ref{prop:loop_products} we get at once the following result.
\begin{prop}
$\piepi$ preserves finite products and $\pips$ preserves arbitrary products.
\end{prop}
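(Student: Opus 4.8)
The plan is to use the factorizations $\pips=\pcps\circ\loopps$ and $\piepi=\pcepi\circ\loopepi$ and to reduce the claim to the underlying epi-/pseudospaces, where it then follows at once by composing product-preservation properties already established. So first I would record what products look like in the target categories.

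Since $\pstopgrp$ (resp.\ $\epitopgrp$) is the category of group-objects of $\pstop$ (resp.\ $\epitop$), the forgetful functor to $\pstop$ (resp.\ $\epitop$) creates limits: the product of a family of pseudotopological (resp.\ epitopological) groups has as underlying pseudospace (resp.\ epispace) the product of the underlying spaces, equipped with the componentwise operations. Hence it suffices to produce, for every family $(X_j)_j$ in $\pstoppt$ (resp.\ every finite family in $\epitoppt$), a natural isomorphism $\pips(\Prod_j X_j)\cong\Prod_j\pips X_j$ in $\pstop$ (resp.\ $\piepi(\Prod_j X_j)\cong\Prod_j\piepi X_j$ in $\epitop$), and then to note that the comparison map is automatically a group homomorphism, because the multiplication and inversion on both sides are induced functorially by loop concatenation and loop inversion and therefore commute with the canonical projections.

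The isomorphism itself is obtained by composing two preservation statements. By Prop.\@ \ref{prop:loop_products}, $\loopps$ preserves arbitrary products of pointed spaces, so $\loopps(\Prod_j X_j)\cong\Prod_j\loopps X_j$; applying $\pcps$, which preserves arbitrary products by Prop.\@ \ref{prop:pc_products}, yields $\pips(\Prod_j X_j)=\pcps\loopps(\Prod_j X_j)\cong\pcps(\Prod_j\loopps X_j)\cong\Prod_j\pcps\loopps X_j=\Prod_j\pips X_j$. The same chain, now using that $\loopepi$ preserves arbitrary products and that $\pcepi$ preserves \emph{finite} products, proves the statement for $\piepi$ on finite index sets. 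Naturality passes through because each intermediate isomorphism is natural in $f$.

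The one delicate point — and the spot where I would focus attention — is the word ``isomorphism'' above: one must be sure the canonical comparison map is a genuine isomorphism in the enriched category and not merely a continuous bijection (this is precisely the phenomenon that makes $\biss$ fail to preserve products). But this is exactly what Prop.\@ \ref{prop:pc_products} and Prop.\@ \ref{prop:loop_products} supply, since they assert honest isomorphisms of epi-/pseudospaces. Granting that, what remains is a routine diagram chase with no further obstacle.
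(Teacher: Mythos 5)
Your proposal is correct and follows essentially the same route as the paper, which obtains the result directly by combining Prop.~\ref{prop:pc_products} (product preservation of $\pcps$ and finite-product preservation of $\pcepi$) with Prop.~\ref{prop:loop_products} (product preservation of the loop functors) under the factorizations $\pips=\pcps\loopps$ and $\piepi=\pcepi\loopepi$. The extra care you take about products of group-objects being computed on underlying epi-/pseudospaces and about naturality of the comparison maps is sound but is exactly the routine verification the paper leaves implicit.
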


It is also not hard to check that both $\piepi$ and $\pips$ are homotopy invariant, that is, given two pointed-homotopic continuous maps $f,g\colon X\to Y$ between pointed epispaces (resp., pseudospaces) the induced maps are equal, $\piepi f=\piepi g$ (resp., $\pips f=\pips g$). Analogously, one can check that a change of basepoint induces an isomorphism of epitopological groups (resp., pseudotopological groups).

Much like the path component functors $\pcps$ and $\pcepi$ are lifts of $\pctop$, the functors $\pips$ and $\piepi$ are lifts of $\biss$ to $\pstop$ and to $\epitop$, respectively.
\begin{prop}
The following diagrams of functors are commutative:
\begin{center}
\begin{tikzcd}
\epitoppt\arrow[r,"\piepi"] & \epitopgrp\arrow[d,"R_2"]&&\pstoppt\arrow[r,"\pips"] & \pstopgrp\arrow[d,"R"]\\
\toppt \arrow[u,hook] \arrow[r,"\biss"] & \qtopgrp&&\toppt \arrow[u,hook] \arrow[r,"\biss"] & \qtopgrp\;.
\end{tikzcd}
\end{center}
\end{prop}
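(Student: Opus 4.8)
The plan is to reduce the claimed commutativity of both squares to the two structural results already in hand: Prop.~\ref{prop:extension} (that $\loopps$ and $\loopepi$ extend $\looptop$) and Prop.~\ref{prop:lift} (that $\pcps$ and $\pcepi$ lift $\pctop$), together with the factorizations $\biss=\pctop\circ\looptop$, $\piepi=\pcepi\circ\loopepi$ and $\pips=\pcps\circ\loopps$. Everything then follows by composing identifications that are all concrete, so no topology-level computation is needed.

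First I would treat the epitopological square. Fix $(X,x_0)\in\toppt$. Since $S^1$ is locally compact Hausdorff, Prop.~\ref{prop:expo}\eqref{expo:itm:second} gives $X^{S^1}\in\top$, hence $\looptop(X,x_0)\in\toppt$; and Prop.~\ref{prop:extension} (with $X'=X\in\toppt$) gives $\loopepi(X,x_0)=\looptop(X,x_0)$. Therefore $\piepi(X,x_0)=\pcepi\loopepi(X,x_0)=\pcepi\looptop(X,x_0)$, and because $\looptop(X,x_0)\in\top$, the lift relation $R_2\pcepi(-)=\pctop(-)$ from Prop.~\ref{prop:lift} yields $R_2\piepi(X,x_0)=\pctop\looptop(X,x_0)=\biss(X,x_0)$ as topological spaces, i.e.\ as quasitopological groups. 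The pseudotopological square is handled the same way but using both reflectors: for $(X,x_0)\in\toppt\subseteq\epitoppt$, Prop.~\ref{prop:extension} gives $\loopps(X,x_0)=\loopepi(X,x_0)=\looptop(X,x_0)$, so $\pips(X,x_0)=\pcps\looptop(X,x_0)$; then, using $\looptop(X,x_0)\in\top\subseteq\epitop$, Prop.~\ref{prop:lift} gives first $R_1\pcps\looptop(X,x_0)=\pcepi\looptop(X,x_0)$ and then $R_2\pcepi\looptop(X,x_0)=\pctop\looptop(X,x_0)$, whence $R\pips(X,x_0)=\biss(X,x_0)$ since $R=R_2R_1$.

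It remains to check the diagrams commute on morphisms, not just objects. Here I would observe that the reflectors $R_1,R_2$ and all the extension/lift identifications leave the underlying set-maps unchanged, and that $\loopepi f$, $\loopps f$, $\looptop f$ all act by $l\mapsto f\circ l$; hence for a based continuous map $f$ of topological spaces the homomorphisms $R_2\piepi f$, $R\pips f$ and $\biss f$ all coincide with $\pctop\looptop f$ on underlying sets, and being group homomorphisms they coincide. Combined with the object-level identities this gives commutativity of both squares as diagrams of functors.

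The only genuinely new point to verify — and I expect it to be the mildest of obstacles — is that the vertical arrows make sense, i.e.\ that applying the underlying-space reflector to a group object of $\epitop$ (resp.\ $\pstop$) produces a \emph{quasi}topological group. This holds because for each fixed element the left and right translations and the inversion map are $\epitop$- (resp.\ $\pstop$-) morphisms, and a concrete reflector preserves morphisms; thus separate continuity of multiplication and continuity of inversion survive reflection, which is exactly what membership in $\qtopgrp$ requires. (Full joint continuity of multiplication is of course not expected to survive, since $R_2$ and $R$ need not preserve products — this is precisely why the target is $\qtopgrp$ rather than $\topgrp$.)
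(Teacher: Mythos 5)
Your proof is correct and takes essentially the same route as the paper, whose proof simply cites the previously established facts that the path-component functors are lifts (Prop.~\ref{prop:lift}) and the loop functors are extensions (Prop.~\ref{prop:extension}), which you compose explicitly via $\piepi=\pcepi\loopepi$, $\pips=\pcps\loopps$ and $\biss=\pctop\looptop$. Your extra verification that reflecting a group object of $\epitop$ or $\pstop$ lands in $\qtopgrp$ is a sensible detail that the paper leaves implicit in the statement of the diagrams.
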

\label{prop:lift2}
\begin{proof}
It follows by previously established corresponding results for path component and loop functors.
\end{proof}

By the above proposition, the restrictions of $\piepi$ and $\pips$ to $\toppt$ contain no less information than $\biss$. The next problem asks whether they contain more information at all.

\begin{problem}
Determining whether there are non-homeomorphic spaces $X\not\simeq Y$ for which ${\biss X}\simeq {\biss Y}$, ${\brazas X}\simeq{\brazas Y}$ but ${\piepi X}\not\simeq{\piepi Y}$ or ${\pips X}\not\simeq{\pips Y}$.
\end{problem}

\begin{prop}\label{prop:compare}
Let $X\in\toppt$. Then the statements contained in each sublist are equivalent:
\begin{enumerate}
\item \begin{enumerate}
	\item ${\pips X}\in\epitopgrp$,
	\item ${\pips X}={\piepi X}$,
	\item on $|{\pctop{\looptop X}}|$ the quotient pseudotopology and the quotient epitopology coincide\footnote{Here and in the following the notation $|X|$ indicates the underlying set of $X$.}.
	\end{enumerate}
\item \begin{enumerate}
	\item ${\piepi X}\in\qtopgrp$,
	\item ${\piepi X}\in\topgrp$,
	\item ${\piepi X}={\biss X}$,
	\item ${\piepi X}={\brazas X}$,
	\item on $|{\pctop{\looptop X}}|$ the quotient epitopology and the quotient topology coincide.
	\end{enumerate}
\item \begin{enumerate}
	\item ${\pips X}\in\qtopgrp$,
	\item ${\pips X}\in\topgrp$,
	\item ${\pips X}={\biss X}$,
	\item ${\pips X}={\brazas X}$,
	\item on $|{\pctop{\looptop X}}|$ the quotient pseudotopology and the quotient topology coincide,
	\item ${\looptop X}\to {\pctop{\looptop X}}$ is biquotient.
	\end{enumerate}
\end{enumerate}
\end{prop}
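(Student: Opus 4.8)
The plan is to reduce the whole statement to the behaviour of the reflector tower $\top\xleftarrow{R_2}\epitop\xleftarrow{R_1}\pstop$ on the single object $\looptop X$. By Prop.~\ref{prop:extension} one has $\loopps X=\loopepi X=\looptop X$, a genuine topological space which I abbreviate $L$; its set of path components is the common underlying set of $\pips X$, $\piepi X$ and $\biss X$, and these three group objects differ only in that they carry, respectively, the quotient pseudotopology, the quotient epitopology, and the quotient topology relative to the projection $L\to\pctop L$. Applying Prop.~\ref{prop:lift} with $L\in\top\subset\epitop$ gives $R_1\pcps L=\pcepi L$ and $R_2\pcepi L=\pctop L$, hence also $R\pcps L=\pctop L$; since the reflectors are concrete, this upgrades (via Prop.~\ref{prop:lift2}) to $R_1(\pips X)=\piepi X$, $R_2(\piepi X)=\biss X$ and $R(\pips X)=\biss X$ in the respective group categories. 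As a concrete reflector restricts to the identity on its image, I obtain at once the three ``fixed-point'' equivalences
\begin{gather*}
\pips X\in\epitop\iff\pips X=\piepi X,\\
\piepi X\in\top\iff\piepi X=\biss X,\\
\pips X\in\top\iff\pips X=\biss X,
\end{gather*}
where the left-hand conditions are conditions on the underlying pseudospaces. I will also use that $\top\inclu\epitop\inclu\pstop$ preserves initial sources, hence finite products (Cor.~\ref{cor:preservation}), so that a group object of $\epitop$ or $\pstop$ whose underlying object is a topological space is automatically a topological group; and the identity $\qtopgrp\cap\epitopgrp=\qtopgrp\cap\pstopgrp=\topgrp$ from the remark above, which is exactly what lets one read ``$\piepi X\in\qtopgrp$'' as ``the epitopology of $\piepi X$ is a topology''.

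Granting this, the first sublist is immediate: $\pips X\in\epitopgrp$ says precisely that the underlying pseudospace of $\pips X$ lies in $\epitop$ (its operations being already continuous and $\epitop$ being initially closed, hence full, in $\pstop$), which by the first fixed-point equivalence is $\pips X=\piepi X$, and this in turn is literally the equality of the quotient pseudotopology and the quotient epitopology on $|\pctop\looptop X|$. For the second sublist I would establish the cycle $\text{(e)}\Rightarrow\text{(c)}\Rightarrow\text{(b)}\Rightarrow\text{(e)}$ together with $\text{(a)}\Leftrightarrow\text{(b)}$: item (e) says $\pcepi L=\pctop L$, i.e.\ the underlying epispace of $\piepi X$ is topological, which by the second fixed-point equivalence is (c); (c) gives $\piepi X=\biss X\in\topgrp$ by the finite-product remark, which is (b); (b) makes the underlying object of $\piepi X$ topological, hence $R_2$-fixed and equal to $\pctop L$, which is (e); and $\text{(a)}\Leftrightarrow\text{(b)}$ holds because $\topgrp\subset\qtopgrp$ while $\qtopgrp\cap\epitopgrp=\topgrp$. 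The third sublist is the same argument, now comparing $\pcps\looptop X$ with $\pctop\looptop X$ through $R$, plus the extra item (f): by the characterization recalled earlier (via Kent's theorem \cite{Ken1969}) that for $Y\in\top$ the quotient pseudotopology on $\pctop Y$ is topological iff the projection $Y\to\pctop Y$ is biquotient, applied to $Y=\looptop X$, one gets (f) $\iff$ $\pcps\looptop X\in\top$, which is (e) and hence also (c), (b) and (a).

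It remains to fold in the Brazas items, (d) in the second and third sublists. The input is that $\brazas X$ is always a topological group, so $\piepi X=\brazas X$ (resp.\ $\pips X=\brazas X$) forces the left-hand side into $\topgrp$, which is (b); conversely, once (b) and (c) hold the Biss quotient topology on $\pi_1$ is already a group topology, so the transfinite procedure defining $\brazas X$ stabilizes at the first step --- the quotient topology with respect to $\biss X\times\biss X\to\pi_1$ is coarser than, and makes continuous, the original quotient topology, hence coincides with it --- and therefore $\brazas X=\biss X=\piepi X$ (resp.\ $=\pips X$). I expect this last point to be the only one requiring genuine care, since the text provides $\brazas$ only through its iterative definition; everything else is bookkeeping along the reflector tower, the one recurring subtlety being the distinction between an object and its underlying (pseudo- or epi-)space, which the concreteness of the reflectors and the intersection formula $\qtopgrp\cap\epitopgrp=\qtopgrp\cap\pstopgrp=\topgrp$ handle cleanly.
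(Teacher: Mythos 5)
Your proposal is correct and follows essentially the route the paper intends: its proof is only a sketch ("use previous results together with $\biss X=\brazas X$ iff $\biss X\in\topgrp$"), and your reflector-tower bookkeeping, the use of Prop.~\ref{prop:lift}/\ref{prop:extension}, the intersection formula $\qtopgrp\cap\epitopgrp=\qtopgrp\cap\pstopgrp=\topgrp$, and Kent's biquotient criterion are exactly the "previous results" being invoked. The only deviation is that you re-derive the cited fact from \cite{Bra2013} (stabilization of the iterated quotient topology at the first step when $\biss X$ is already a topological group, together with $\brazas X\in\topgrp$) rather than quoting it, and that argument is sound.
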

\begin{proof}[Sketch of proof]
Use previous results together with the fact: ${\biss X}={\brazas X}$ iff ${\biss X}\in\topgrp$ (see \cite[Prop.~3.3]{Bra2013}).
\end{proof}

The above proposition does not clarify whether ${\biss X}={\brazas X}$ implies ${\piepi X}\in\qtopgrp$ or ${\pips X}\in\qtopgrp$. If either of these implications is true then ${\biss X}$ is a topological group iff $\quot{\looptop X}{}\times\quot{\looptop X}{}$ is quotient, where $\quot{\looptop X}{}\colon \looptop X\to {\pctop{\looptop X}}$ is the natural quotient map in $\top$ (thereby settling a question in \cite[p.17]{BraFab2013}).

\begin{rmk}
Let $X\in\toppt$ be such that ${\looptop X}$ is totally path disconnected. By Prop.\@ \ref{prop:compare}, ${\biss X}={\brazas X}={\pips X}={\piepi X}\simeq{\looptop X}$.
\end{rmk}

Many questions remain to be addressed. We mention the following.

\begin{problem}
Understanding whether $\piepi$ and $\pips$ are essentially surjective, that is, whether for each $G\in\epitopgrp$ and each $G'\in\pstopgrp$ there are an epispace $X$ and a pseudospace $X'$ such that $\piepi X\simeq G$ and $\pips X'\simeq G'$.
\end{problem}

\begin{problem}
Understanding what is the image of $\toppt$ under $\piepi$ and under $\pips$.
\end{problem}

\section{Conclusions}
We identified two supercategories of $\top$, namely $\epitop$ and $\pstop$, where it is possible to define well-behaved enriched fundamental group functors, $\piepi$ and $\pips$, by replicating the quotient construction specified in \cite{Bis2002}. One of the key points, which is a new result to the best of the author's knowledge, is the existence of a pasting lemma within these constructs. The product-stability of quotient maps in these larger categories guarantees that the target category of $\piepi$ and $\pips$ consists of the category of their group-objects, thus avoiding the pitfall of the original construction $\biss$ \cite{Bis2002}. It then turns out that $\piepi$ and $\pips$ are suitable lifts of $\biss$ under the natural inclusions $\top\inclu\epitop\inclu\pstop$ and the corresponding reflectors $\top\xleftarrow{R_2}\epitop$ and $\top\xleftarrow{R}\pstop$, as shown by the following diagrams:
\begin{center}
\begin{tikzcd}
\epitoppt\arrow[r,"\piepi"] & \epitopgrp\arrow[d,"R_2"]&&\pstoppt\arrow[r,"\pips"] & \pstopgrp\arrow[d,"R"]\\
\toppt \arrow[u,hook] \arrow[r,"\biss"] & \qtopgrp&&\toppt \arrow[u,hook] \arrow[r,"\biss"] & \qtopgrp\;.
\end{tikzcd}
\end{center}
These two functors should be compared with the topologized fundamental group introduced and studied in \cite{Bra2013}. In particular, it should be possible to devise a covering epispace theory and a covering pseudospace theory along the lines of \cite{Bra2012a}, as well as constructing groupoid versions of $\piepi$ and $\pips$.
Obviously it is possible to recursively define all higher homotopy group functors by putting $\pihepi\coloneqq \pcepi(\loopepi)^n$ and $\pihps\coloneqq \pcps(\loopps)^n$. In a broader perspective, it would be nice to investigate how much of classical homotopy theory carries over to this new realm and what, if any, new insight the richer structure of these functors might offer.

\bibliographystyle{alpha}
\bibliography{pi1top2017.bbl}

\end{document}